\documentclass{amsart}
\usepackage{amssymb}


\usepackage{xcolor}
\usepackage{soul}

\newcommand\gl{\mathrm{GL}}
\newcommand\pgl{\mathrm{PGL}}
\newcommand\psl{\mathrm{PSL}}
\newcommand{\reals}{\mathbb{R}}
\newcommand{\cpx}{\mathbb{C}}
\newcommand{\integers}{\mathbb{Z}}
\newcommand{\sphere}{\mathbb{S}}

\newcommand{\proj}{\mathbb{P}}

\newcommand{\gsl}{\mathrm{SL}}
\newcommand{\limitset}{\Lambda}

\newcommand{\torus}{\mathbb{T}}
\newcommand{\semidirectprod}{\reals^N \rtimes_A \reals}

\newcommand{\abs}[1]{\lvert#1\rvert}

\newtheorem{theorem}{Theorem}[section]
\newtheorem{lemma}[theorem]{Lemma}
\newtheorem{proposition}[theorem]{Proposition}
\newtheorem{corollary}[theorem]{Corollary}

\theoremstyle{definition}

\theoremstyle{remark}
\newtheorem{remark}[theorem]{Remark}

\numberwithin{equation}{section}


\begin{document}
  \title[A family of complex Kleinian split solvable groups]{A family of complex Kleinian split solvable groups}
  
  \author{Waldemar Barrera}
  \address{School of mathematics, Autonomous University of Yucatan, Mexico}
  \email{bvargas@correo.uady.mx}
  
  \author{Ren\'e Garc\'ia}
  \address{School of mathematics, Autonomous University of Yucatan, Mexico}
  \email{rene.garcia@correo.uady.mx}
  
  \author{Juan Pablo Navarrete}
  \address{School of mathematics, Autonomous University of Yucatan, Mexico}
  \email{jp.navarrete@correo.uady.mx}
  
  \subjclass[2020]{Primary 37B05, 22E25; Secondary 37F32, 22E40}
  \date{}
  \dedicatory{}
  \keywords{Complex Kleinian, split solvable, limit set, discontinuity region.}
  
  
  \begin{abstract}
    It is shown that lattices of a family of split solvable subgroups of $\psl(N+1,\cpx)$ are complex Kleinian using techniques of Lie groups and dynamical systems, also that there exists a minimal limit set for the action of these lattices on the \emph{N} dimensional complex projective space and that there are exactly two maximal discontinuity regions. 
  \end{abstract}

  \maketitle

\section{Introduction and main results}

For $N > 1$ complex Kleinian groups were introduced by Seade and Verjovsky 
in~\cite{seade2001actions},~\cite{Seade2002} as a generalization of
the Kleinian groups of Poincar\'e. A complex Kleinian group is a
discrete subgroup of $\psl(N+1,\cpx)$ which acts properly
discontinuously on some not empty open subset of complex projective space
$\proj^N_\cpx$. To decide whether a subgroup of $\psl(N+1,\cpx)$ is
complex Kleinian is a difficult but important task, 
there have been some advances in complex dimension $N=2$, for example  
in his Ph.D. thesis and in the articles~\cite{Navarrete2006},~\cite{Navarrete2008}, 
Navarrete utilizes the Kulkarni limit set $\Lambda_{kul}$~\cite{kulkarni1978groups}
 in order to find in a canonical way a region of complex projective plane
 where a given discrete subgroup of $\psl(3,\cpx)$ acts properly 
 discontinuously moreover, 
 in $\proj^2_\cpx$ there are 
 many theorems analogous to the classical theory in $\proj^1_\cpx$,
 for example in~\cite{Navarrete2008}
 a classification for the dynamics of the action of cyclic subgroups  
 of $\psl(3,\cpx)$ is found,   
 however even in this dimension there are differences with the
 classical theory of Kleinian groups because there are several  
 non equivalent notions of limit sets
 (cf. \cite{Barrera2018}~\cite{barrera-vargas_cordero_carrillo_2011})
 also every discontinuity region of an infinite complex Kleinian group 
 contains a complex projective line while in the classical theory
 there are groups with only finitely many points in the complement of the
 discontinuity region. 
 In dimension $N > 2$  
  there are a lot of technical difficulties
 to determine whether a given discrete subgroup of $\psl(N+1,\cpx)$ is 
 complex Kleinian, for example 
 the 
 Kulkarni limit set,  
 one of the most important tools in 
 dimension 2,  
 is hard to compute~\cite{cano-limit-set,Cano2017c,cano2017classical}.
 In this article 
 we aim to propose a new approach in order to 
 construct a discontinuity region for a family of discrete 
subgroups of $\psl(N+1,\cpx)$.  
Inspired by the articles~\cite{Mosak1997} and~\cite{barrera2018three}, we adapt techniques of 
dynamical systems and differential geometry to prove the following theorems. 
\begin{theorem}\label{thm:disc-region}
Let $A: \reals \to \gl(N,\reals)$ be a smooth, faithful and closed representation such that each $A(t)$ has no eigenvalue of 
unit length and let $\rho: \semidirectprod \to \gl(N+1,\cpx)$ 
be the representation,
    \begin{equation}\label{eq:g-decomp}
    \rho(b, t) = \begin{pmatrix}
      A(t) & b\\
      0 & 1
    \end{pmatrix}.
  \end{equation}
Assume $G \subset \pgl(N+1,\cpx)$ admits a lift $\tilde G \subset
\gl(N+1,\cpx)$ 
conjugate in $\gl(N+1,\cpx)$ to $\rho(\semidirectprod)$. Let $N_1$
($N_2$) be the cardinality of the 
set of eigenvalues counted with multiplicity of the matrix $A(1)$
inside (outside) the unit disk, then the following holds:
\begin{enumerate}
\item\label{it:thm-1-1}
If $N_i>0$, there is a $G$-invariant open set $\Omega_i \subset
\proj^N_{\cpx}$  where the action
is equivariant to the action on the product 
$G \times X$ given by $(h, g,x) \mapsto (hg, x)$ and 
$X$ is diffeomorphic to the product of an $N_i-1$ sphere and an Euclidean
space. 
  \item\label{it:thm-1-2} If $\Omega \subset \proj^N_{\cpx}$ is
    a $G$-invariant open 
    set where the action is proper, then $\Omega \subset \Omega_i$ for
    some of the sets defined above. 
    
  \item\label{it:thm-1-3} If $G$ admits a lattice $\Gamma$ then $\rho$ is a
    representation into $\gsl(N,\reals)$, there are exactly two open
    sets $\Omega_1$, $\Omega_2$ and the complement $\limitset =
    \proj^N_{\cpx}\setminus (\Omega_1\cup \Omega_2)$ is a closed,
    $G$-invariant set such that
\begin{enumerate}
\item\label{it:thm-1-3-1} The set of points of $\limitset$ with
  infinite isotropy is dense.
\item\label{it:thm-1-3-2} For almost every $z \in \limitset \cap
  \proj^N_{\reals}$ the set of accumulation points of the orbit $\Gamma z$ is
  dense in $\limitset \cap \proj^N_{\reals}$. 
\end{enumerate}
\end{enumerate}
\end{theorem}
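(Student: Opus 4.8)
The plan is to first reduce to the model representation and then analyse the induced dynamics on the two natural charts of $\proj^N_\cpx$. Since a conjugation in $\gl(N+1,\cpx)$ induces a biholomorphism of $\proj^N_\cpx$ that carries $G$-invariant open sets, proper domains and limit sets to their counterparts, I may assume $\tilde G=\rho(\semidirectprod)$ outright. Writing $A(t)=\exp(tB)$, the hypothesis that no $A(t)$ has an eigenvalue of unit modulus means $B$ has no eigenvalue on the imaginary axis, so $\reals^N=E^s\oplus E^u$ splits into the contracting and expanding subspaces of the flow, with $\dim E^s=N_1$ and $\dim E^u=N_2$. On the affine chart $\{z_{N+1}\neq 0\}\cong\cpx^N$ the action is $w\mapsto A(t)w+b$; decomposing $w=x+iy$ with $x,y\in\reals^N$, the real part undergoes the affine map $x\mapsto A(t)x+b$ while the imaginary part undergoes the purely linear map $y\mapsto A(t)y$, untouched by the translations. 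On the invariant hyperplane $H_\infty=\{z_{N+1}=0\}\cong\proj^{N-1}_\cpx$ the action is the projectivisation of $A(t)$, and it contains the $G$-invariant subspaces $\proj(E^s_\cpx)$ and $\proj(E^u_\cpx)$.

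For part \ref{it:thm-1-1} I would define $\Omega_i$ as the set of points whose imaginary part is nonzero and whose dominant asymptotic direction under the flow lies in $E^s$ (for $i=1$) respectively $E^u$ (for $i=2$); concretely $\Omega_1$ is cut out by requiring the $E^s$-component of $y$ to be nonzero while controlling the $E^u$-component, an open and $G$-invariant condition. The core analytic step is to prove that $G$ acts freely and properly on $\Omega_i$: freeness is immediate from the translation part, and properness follows by constructing a $G$-invariant proper map from $\Omega_i$ onto the slice obtained by normalising $y$, using that $\exp(tB)$ leaves no compact set invariant in the relevant directions. Because $\semidirectprod$ is diffeomorphic to $\reals^{N+1}$ and hence contractible, every principal $G$-bundle is trivial, so a proper free action yields a $G$-equivariant diffeomorphism $\Omega_i\cong G\times X_i$ with $G$ acting by left translation, which is exactly the asserted equivariance. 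It then remains to identify $X_i=\Omega_i/G$: a dimension count gives $\dim X_i=2N-(N+1)=N-1$, and the explicit $G$-invariant map sending a point to the direction of the $E^s$-component of $y$ in $\sphere^{N_1-1}$ together with the flow-normalised $E^u$-component in $\reals^{N_2}$ exhibits $X_1\cong\sphere^{N_1-1}\times\reals^{N_2}$, and symmetrically $X_2\cong\sphere^{N_2-1}\times\reals^{N_1}$.

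Statement \ref{it:thm-1-2} is where I expect the main difficulty. Here I would show that the complement of $\Omega_i$ is the smallest closed invariant set off which the action can be proper, by proving that at every point of $\proj^N_\cpx\setminus(\Omega_1\cup\Omega_2)$ properness already fails; this includes all of $\proj^N_\reals$, since a real affine point has orbit all of $\reals^N$ with noncompact one-parameter isotropy, so $\proj^N_\reals\subset\limitset$. The delicate part is the dichotomy: if a proper invariant open $\Omega$ meets the complement of $\Omega_1$, I must rule out that it also meets the complement of $\Omega_2$, that is, show a single proper domain cannot straddle both asymptotic regimes. The mechanism is that a point carrying nonzero $E^u$-asymptotics and a point carrying nonzero $E^s$-asymptotics can be pushed arbitrarily close by a divergent sequence in $G$ while both remain in $\Omega$, contradicting properness; making this quantitative via the contraction and expansion rates of $\exp(tB)$ is the crux of the argument.

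For part \ref{it:thm-1-3} I would first invoke that a connected Lie group with a lattice is unimodular; for $\semidirectprod$ the modular function is $t\mapsto\abs{\det A(t)}=e^{t\,\mathrm{tr}\,B}$, so unimodularity forces $\mathrm{tr}\,B=0$, i.e. $A(t)\in\gsl(N,\reals)$. Then $\det A(1)=1$ together with hyperbolicity forces both $N_1>0$ and $N_2>0$, so by parts \ref{it:thm-1-1} and \ref{it:thm-1-2} there are exactly the two maximal proper domains $\Omega_1,\Omega_2$, and $\limitset=\proj^N_\cpx\setminus(\Omega_1\cup\Omega_2)$ is closed and $G$-invariant. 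By the structure theory of lattices in solvable groups I may take $\Gamma=\Gamma_0\rtimes\cycle{A(1)}$ with $\Gamma_0=\Gamma\cap\reals^N\cong\integers^N$ a lattice preserved by the hyperbolic integer matrix $A(1)$, so that $\Gamma$ is the suspension of an Anosov automorphism of $\torus^N=\reals^N/\Gamma_0$. For \ref{it:thm-1-3-1}, every point of $H_\infty$ is fixed by all translations $\rho(b,0)$ and hence has infinite isotropy, and every periodic point of the toral automorphism lifts to a real point of $\limitset$ with infinite cyclic isotropy; since periodic points are dense in $\torus^N$, the explicit description of $\limitset$ from the construction shows these infinite-isotropy points are dense. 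For \ref{it:thm-1-3-2}, note $\limitset\cap\proj^N_\reals=\proj^N_\reals$ and that the $\Gamma$-orbit of a real affine point projects, modulo $\Gamma_0$, to the forward and backward orbit of the toral automorphism; by ergodicity of a hyperbolic (hence having no root-of-unity eigenvalue) toral automorphism almost every such orbit is dense in $\torus^N$, and pulling back yields a dense orbit in $\reals^N$, whose closure is $\proj^N_\reals=\limitset\cap\proj^N_\reals$.
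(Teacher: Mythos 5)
Your proposal is correct and follows the paper's architecture almost exactly: reduce to the model representation, split the imaginary part of the affine chart along the stable and unstable subspaces of $\dot A(0)$, exhibit each $\Omega_i$ as an equivariant product $G\times(\sphere^{N_i-1}\times\reals^{N-N_i})$, prove maximality by the straddling contradiction (a sequence $w_n=iA(-n)y_1+iy_2$ pushed by $(0,n)$ connects the two asymptotic regimes, which is the paper's Propositions~\ref{prop:unique-maximal} and~\ref{prop:discrete-u-max}), and analyse $\limitset$ through the suspended hyperbolic toral automorphism. Two sub-arguments genuinely differ, both in your favour for brevity. To get $A(t)\in\gsl(N,\reals)$ you invoke unimodularity of a Lie group admitting a lattice, whereas the paper (Proposition~\ref{prop:lattice}) extracts an integer matrix $\sigma A(h)\sigma^{-1}\in\gsl(N,\integers)$ from $\Gamma\cap\reals^N$; both are standard, though the paper's computation is needed anyway to set up the toral dynamics you also rely on. For density of the infinite-isotropy points you use density of periodic points of a hyperbolic toral automorphism, which is classical and shortcuts the paper's route through Lemma~\ref{lem:I-Ak-inv-bounded} and Lemma~\ref{lem:xg-loxodromic-R} (boundedness of $\|(I-B^n)^{-1}\|$ combined with density of generic orbits); the two arguments identify the same set $\{(I-B^n)^{-1}b\}$. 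The one place your sketch underplays the work is the slice: the claim that ``normalising $y$'' yields a free, proper, product action is exactly where the paper's Lemma~\ref{lem:stable-diffeo} does the technical lifting, using the Lyapunov quadratic form $x^TPx$ to show that every trajectory of $\exp(tM)$ in $E^s\setminus\{0\}$ meets a fixed sphere exactly once and transversally; without that, neither the bijectivity nor the smoothness of the map $G\times\sphere^{N_i-1}\times\reals^{N-N_i}\to\Omega_i$ is established, and your appeal to ``a $G$-invariant proper map onto the slice'' presupposes rather than proves properness.
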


As a consequence of the Theorem~\ref{thm:disc-region}, the action on
$\Omega_i$ is proper 
and free provided it is not empty. Moreover, we have the following
corollary.
\begin{corollary}\label{cor:complex-kleinian-lattices}
  If $G \subset \psl(N+1,\cpx)$ is a Lie group satisfying the
  conditions of the Theorem~\ref{thm:disc-region} and it admits a lattice
$\Gamma$, then each open set $\Omega_i$ is a maximal invariant open
set where $\Gamma$ acts properly discontinuously and the quotient space
$\Gamma\setminus \Omega_i$ is a smooth manifold diffeomorphic to
$\Gamma\setminus G \times X$ for a space $X$ 
diffeomorphic to the product of an sphere and an Euclidean space.
\end{corollary}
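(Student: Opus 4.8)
The plan is to derive the corollary from Theorem~\ref{thm:disc-region} and its following remark, using two standard facts about lattices in solvable Lie groups. Under the lattice hypothesis, item~\eqref{it:thm-1-3} places $\rho$ inside $\gsl(N,\reals)$, so $\det A(1)=1$; since $A(1)$ has no eigenvalue of unit modulus, this forces $N_1>0$ and $N_2>0$, whence both $\Omega_1$ and $\Omega_2$ are non-empty and, by the remark, $G$ acts properly and freely on each. Everything else for $\Gamma$ will be obtained by restricting these $G$-statements.

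Granting this, the assertions about $\Gamma$ follow by restriction. A lattice $\Gamma$ is discrete, hence a closed subgroup of $G$, and the restriction of the proper $G$-action on $\Omega_i$ to the closed subgroup $\Gamma$ is again proper; since $\Gamma$ carries the discrete topology, a proper $\Gamma$-action is exactly a properly discontinuous one. Freeness of the $G$-action gives freeness of the $\Gamma$-action, so $\Gamma\backslash\Omega_i$ is a smooth manifold and $\Omega_i\to\Gamma\backslash\Omega_i$ is a covering. For the diffeomorphism type I would take the $G$-equivariant diffeomorphism $\Phi\colon\Omega_i\to G\times X$ furnished by item~\eqref{it:thm-1-1}, which intertwines the $G$-action with $(h,(g,x))\mapsto(hg,x)$; this map is in particular $\Gamma$-equivariant, so it descends to a diffeomorphism $\Gamma\backslash\Omega_i\cong(\Gamma\backslash G)\times X$. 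Finally, because $G$ is a connected solvable Lie group, every lattice in it is uniform, so $\Gamma\backslash G$ is a compact smooth manifold; together with $X$ diffeomorphic to a sphere times a Euclidean space this exhibits $\Gamma\backslash\Omega_i$ as the asserted smooth manifold.

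The real content is maximality, since Theorem~\ref{thm:disc-region} item~\eqref{it:thm-1-2} is formulated only for $G$-invariant sets, whereas here the competing set $\Omega'$ need only be $\Gamma$-invariant. Let $\Omega'\supseteq\Omega_i$ be any $\Gamma$-invariant open set on which $\Gamma$ acts properly discontinuously. A properly discontinuous action has finite isotropy at every point, while by item~\eqref{it:thm-1-3-1} the points of $\limitset$ with infinite $\Gamma$-isotropy are dense in $\limitset$; since $\Omega'$ is open, if it met $\limitset$ it would contain such a point and fail to act properly discontinuously. Hence $\Omega'\cap\limitset=\emptyset$, that is, $\Omega'\subseteq\Omega_1\cup\Omega_2$.

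It remains to rule out that $\Omega'$ meets both regions, and this I expect to be the main obstacle. First note that $\Omega_1$ and $\Omega_2$ cannot be disjoint: for two disjoint $\Gamma$-invariant open sets carrying properly discontinuous actions the action on their union is again properly discontinuous, which together with maximality would leave only one region instead of two. Consequently $\Gamma$ does not act properly discontinuously on $\Omega_1\cup\Omega_2$, and the failure is concentrated in the overlap, where the two product descriptions of item~\eqref{it:thm-1-1} record respectively the contracting ($N_1$) and expanding ($N_2$) directions of $A(1)$. To conclude, I would fix $p\in\Omega'\cap(\Omega_j\setminus\Omega_i)$, with $\{i,j\}=\{1,2\}$, and, using the product structure $\Omega_j\cong G\times X$ together with the density of generic orbits in $\limitset\cap\proj^N_\reals$ from item~\eqref{it:thm-1-3-2}, construct a compact set $K\subset\Omega'$ met by $\gamma_n K$ for infinitely many distinct $\gamma_n\in\Gamma$, contradicting proper discontinuity on $\Omega'$. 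Making this crossing construction precise---controlling how a $\Gamma$-orbit is forced to travel from the expanding to the contracting region through the shared overlap---is the technical heart of the argument; granting it, $\Omega'=\Omega_i$, so each $\Omega_i$ is maximal and $\Omega_1,\Omega_2$ are precisely the two maximal regions of proper discontinuity.
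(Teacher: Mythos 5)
Most of your proposal is sound and follows the route the paper implicitly intends: non-emptiness of both $\Omega_i$ from $\det A(1)=1$ plus hyperbolicity, restriction of the proper free $G$-action to the discrete cocompact $\Gamma$ to get a free properly discontinuous action, descent of the equivariant diffeomorphism $\Omega_i\cong G\times X$ to $\Gamma\setminus\Omega_i\cong(\Gamma\setminus G)\times X$, and the reduction of maximality to showing that a $\Gamma$-invariant properly discontinuous $\Omega'$ misses $\limitset$ (via density of infinite-isotropy points) and cannot meet both $\Omega_1\setminus\Omega_2$ and $\Omega_2\setminus\Omega_1$. You also correctly diagnose that part~(\ref{it:thm-1-2}) of the theorem does not apply directly because $\Omega'$ is only $\Gamma$-invariant.

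However, you stop exactly at the step that carries the real content, saying ``granting it'' for the crossing construction. That is a genuine gap, and it is precisely what the paper supplies as Proposition~\ref{prop:discrete-u-max}. The argument there is short and you should carry it out rather than defer it: working in the chart $(U,\phi)$, take $z_1=x_1+iy_1$ with $y_1\in E^s\setminus\{0\}$ and $z_2=x_2+iy_2$ with $y_2\in E^u\setminus\{0\}$, both in $\phi(\sigma^{-1}\Omega')$. By Lemma~\ref{lem:GA-Xs-dense} (after perturbing $x_1$ inside the open set $\Omega'$ to a point with dense $\integers^N\rtimes_B\integers$-orbit) choose $g_j=(b_j,n_j)$ with $n_j\to\infty$ and $g_j*x_1\to x_2$. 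Then $w_j:=g_j*z_1+iy_2=g_j*x_1+iB^{n_j}y_1+iy_2\to z_2$ because $B^{n_j}y_1\to 0$ on $E^s$, so $w_j\in\Omega'$ eventually, while $g_j^{-1}*w_j=z_1+iB^{-n_j}y_2\to z_1$ because $B^{-n_j}y_2\to 0$ on $E^u$; since $g_j^{-1}$ leaves every compact subset of the group, this contradicts proper discontinuity on $\Omega'$. Note also that your intermediate assertion ``consequently $\Gamma$ does not act properly discontinuously on $\Omega_1\cup\Omega_2$'' does not follow from the non-disjointness of $\Omega_1$ and $\Omega_2$ (non-disjointness is anyway immediate from $U^\pm=\{z:\pi_{s/u}(\operatorname{Im}z)\neq 0\}$); the failure of properness on the union is exactly the statement proved by the sequence above, not a formal consequence of overlap.
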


Hence a lattice of a Lie group as described in the
Theorem~\ref{thm:disc-region} is complex Kleinian. The Theorem and the
Corollary~\ref{cor:complex-kleinian-lattices} are important 
because they give general conditions to construct complex Kleinian
groups independently of the dimension of the projective space and
because they describe the quotient spaces, a task traditionally
difficult to achieve. They also show that the complement $\limitset$ of the
discontinuity regions $\Omega_i$ is a minimal set where lattices of
the group have rich dynamics in the sense described in
part~(\ref{it:thm-1-3}) of the Theorem~\ref{thm:disc-region}. We call
$\limitset$ the limit set of the group since the complement is a union
of open sets where the action of lattices of $G$ is properly
discontinuous.
The second main result is the following theorem, which in
a sense is a partial converse of the
Theorem~\ref{thm:disc-region}. Recall a hyperbolic toral automorphism
is a matrix in $\gsl(N,\integers)$ with no eigenvalue of unit length
and that any such matrix $B$ induces a representation
$\integers \to \gsl(N,\integers)$, such that $k\mapsto B^k$.    

\begin{theorem}\label{thm:limitset}
Let $B \in \gsl(N,\reals)$ be a  hyperbolic toral automorphism and let
$\rho: \integers^N\rtimes_B \integers \to \gsl(N+1,\integers)$ be the
representation
\begin{equation}
\label{eq:rep-integer}
\rho(b,k) = 
\begin{pmatrix}
  B^k & b \\
  0    & 1
\end{pmatrix}.
\end{equation}
If $\Gamma \subset \psl(N+1,\cpx)$ is a discrete subgroup that admits
a lift conjugate in $\psl(N+1,\cpx)$ to $\rho(\integers^N\rtimes_B
\integers)$,
then there are exactly two invariant open sets $\Omega_i \subset
\proj^N_{\cpx}$, $i=1,2$ 
such that 
\begin{enumerate}
  \item\label{thm:1-2-1} The action on each $\Omega_i$ is free, properly
    discontinuous and if $\Omega \subset \proj^N_{\cpx}$ is another
    $\Gamma$-invariant open set where the group acts properly
    discontinuously, then $\Omega \subset \Omega_i$ for
    some $i=1,2$. 
  \item\label{thm:1-2-2} There is a Lie group $G \subset
    \psl(N+1,\cpx)$ such that $\Gamma$ is a lattice of $G$ and for
    each $\Omega_i$ there is a smooth manifold $X_i$ and a fibre
    bundle $\Gamma \setminus \Omega_i \to X_i$ with fibres
    diffeomorphic to $\Gamma \setminus G$ and one of the following
    alternatives hold: 
\begin{enumerate}
\item The bundle is trivial and $X_i$ is contractible to a sphere.
\item $X_i$ is homotopically equivalent to a real projective space. 
\end{enumerate}
\item\label{thm:1-2-3} The limit set
  $\limitset = \proj^N\setminus(\Omega_1\cup \Omega_2)$ satisfies the
  property~(\ref{it:thm-1-3}) of the Theorem~\ref{thm:disc-region}. 
\end{enumerate}
\end{theorem}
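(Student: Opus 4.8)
The plan is to deduce Theorem~\ref{thm:limitset} from Theorem~\ref{thm:disc-region} by exhibiting $\Gamma\cong\integers^N\rtimes_B\integers$ as a lattice in a split solvable Lie group $G\subset\psl(N+1,\cpx)$ to which the continuous theorem applies. The bridge from the discrete datum $B$ to a one-parameter family is a real logarithm. If $B$ lies in the image of $\exp\colon\mathfrak{gl}(N,\reals)\to\gl(N,\reals)$, I would pick $X$ with $\exp(X)=B$, set $A(t)=\exp(tX)$ so that $A(1)=B$, and take $G=\reals^N\rtimes_A\reals$, which contains $\Gamma$ in the evident way. Hyperbolicity of $B$ is exactly what makes $A$ admissible: the eigenvalues of $X$ all have nonzero real part, so $A$ is smooth, faithful and closed and each $A(t)$ with $t\neq0$ has no eigenvalue of unit length; a real logarithm can exist only when $\det B=1$, and then $\det A(t)\equiv1$, so $\rho$ takes values in $\gsl(N,\reals)$. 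Finally $\det B=\pm1$ together with hyperbolicity forces both $N_1>0$ and $N_2>0$, so that the sets $\Omega_1,\Omega_2$ of Theorem~\ref{thm:disc-region} are both nonempty, and $\integers^N$ is a $B$-invariant cocompact lattice in $\reals^N$, whence $\Gamma$ has finite covolume in $G$.

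The obstruction is that $B$ need not admit a real logarithm; by Culver's criterion this fails precisely when some negative real eigenvalue of $B$ carries Jordan blocks of odd multiplicity. In that case $B^2$ does admit a real logarithm, because squaring sends negative real eigenvalues to positive ones and pairs the blocks arising from the conjugate purely imaginary directions of $B$. I would then set $A(1)=B^2$, form the connected group $G_0=\reals^N\rtimes_A\reals$, and adjoin the matrix $g=\rho(0,1)$, which acts on $\reals^N$ by $B$. Since $g$ commutes with the flow, normalizes $\reals^N$, and satisfies $g^2=A(1)\in G_0$ while $g\notin G_0$, the group $G=G_0\sqcup gG_0$ is a Lie group with two components and identity component $G_0$; one checks that $\Gamma\cap G_0=\Gamma_0:=\integers^N\rtimes_{B^2}\integers$ is an index-two cocompact lattice in $G_0$ and hence that $\Gamma$ is a lattice in $G$ with $\Gamma\setminus G\cong\Gamma_0\setminus G_0$.

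With $G$ (respectively $G_0$) in hand, Theorem~\ref{thm:disc-region} supplies the two invariant open sets $\Omega_1,\Omega_2$, the exactness of the count in part~(\ref{it:thm-1-3}), the properness and freeness of the ambient action, and the density statements~(\ref{it:thm-1-3-1}) and~(\ref{it:thm-1-3-2}); since $\Gamma$ is a lattice, properness of the ambient action immediately gives that $\Gamma$ acts freely and properly discontinuously on each $\Omega_i$, yielding part~(\ref{thm:1-2-1}) and part~(\ref{thm:1-2-3}). For the quotient structure of part~(\ref{thm:1-2-2}) I would use the equivariant splitting $\Omega_i\cong G_0\times X_i$ with $X_i$ diffeomorphic to $\sphere^{N_i-1}\times\reals^{d}$ furnished by part~(\ref{it:thm-1-1}). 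When $B$ has a real logarithm this gives at once $\Gamma\setminus\Omega_i\cong(\Gamma\setminus G)\times X_i$, a trivial bundle whose base is contractible to the sphere $\sphere^{N_i-1}$, which is alternative~(a). In the disconnected case the generator $g$ descends to an involution $\sigma$ of $X_i$ (an involution because $g^2\in G_0$ acts trivially on the base), and quotienting the trivial $\Gamma_0$-bundle by $\langle\sigma\rangle$ produces a fibre bundle $\Gamma\setminus\Omega_i\to X_i/\sigma$ with fibre $\Gamma\setminus G$; the claim of alternative~(b) is that $X_i/\sigma$ is homotopy equivalent to $\proj^{N_i-1}_{\reals}$.

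The step I expect to be the main obstacle is the analysis of this involution. Concretely, $\sigma$ is induced on the sphere of directions of the relevant eigenspace by the order-two matrix $\tau=B\,A(1/2)^{-1}$, which satisfies $\tau^2=B^2 A(1)^{-1}=\mathrm{id}$ and is nontrivial exactly because $B$ has no real square root inside the flow. One must track $\tau$ through the explicit model of $\Omega_i$ and show that it acts on $\sphere^{N_i-1}$ without fixed points, in the antipodal homotopy class, so that $X_i/\sigma\simeq\proj^{N_i-1}_{\reals}$; this is where the sign defect distinguishing $B$ from $\exp(X/2)$ must be read off from the eigenstructure of $B$ on its stable and unstable blocks. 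A second, milder point is the maximality clause of part~(\ref{thm:1-2-1}), which is asserted for $\Gamma$-invariant sets while Theorem~\ref{thm:disc-region}(\ref{it:thm-1-2}) only gives maximality among $G$-invariant sets. To upgrade it I would argue intrinsically: by~(\ref{it:thm-1-3-1}) the points of $\limitset$ with infinite isotropy are dense in $\limitset$, no open set on which $\Gamma$ acts properly discontinuously can contain such a point, and being open it must therefore miss $\limitset$ entirely and lie in $\Omega_1\cup\Omega_2$; disjointness of $\Omega_1$ and $\Omega_2$ together with connectedness then confines it to a single $\Omega_i$.
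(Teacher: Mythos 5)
Your overall route is the same as the paper's: take a real logarithm of $B$ (or of $B^2$, via Culver's theorem, when $B$ has none), build the one-parameter group $A(t)=\exp(tM)$ and the possibly disconnected Lie group $G=G_0\sqcup gG_0$ containing $\Gamma$ as a lattice, invoke Theorem~\ref{thm:disc-region} for the two open sets and the limit-set dynamics, and analyse the induced involution on the sphere factor to get the projective-space alternative in part~(\ref{thm:1-2-2}). That part of the plan is sound and matches the paper, which also reduces part~(\ref{thm:1-2-2})(b) to the quotient of a sphere by an involution and then cites Olum; the step you flag as the main obstacle is handled there in essentially the way you describe.

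The genuine gap is in your argument for the maximality clause of part~(\ref{thm:1-2-1}). Your density-of-infinite-isotropy argument correctly yields $\Omega\cap\limitset=\emptyset$, i.e.\ $\Omega\subset\Omega_1\cup\Omega_2$, but the final step relies on ``disjointness of $\Omega_1$ and $\Omega_2$,'' and these sets are \emph{not} disjoint: in the chart $(U,\phi)$ they are $U^-=\{b+ix+iy: x\in E^s\setminus\{0\}\}$ and $U^+=\{b+ix+iy: y\in E^u\setminus\{0\}\}$, which overlap on the set where both imaginary components are nonzero --- nonempty since a hyperbolic toral automorphism has eigenvalues on both sides of the unit circle. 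Moreover $U^-\cup U^+=\cpx^N\setminus\reals^N$ is connected for $N\geq 2$, so connectedness of $\Omega$ cannot confine it to one $\Omega_i$ either; the candidate $\Omega=\Omega_1\cup\Omega_2$ itself must be ruled out. This requires a genuinely dynamical argument for the \emph{discrete} group: the paper proves a separate statement (Proposition~\ref{prop:discrete-u-max}) showing that an invariant open set on which $\integers^N\rtimes_B\integers$ acts properly cannot contain both a point of $U^-\setminus U^+$ and a point of $U^+\setminus U^-$. Its proof uses the density of the affine $\integers^N\rtimes_B\integers$-orbits in $\reals^N$ (Lemma~\ref{lem:GA-Xs-dense}) to produce elements $g_j=(b_j,n_j)$ with $g_j*x_1\to x_2$ and $n_j\to\infty$, and then the hyperbolic contraction/expansion on $iE^s\oplus iE^u$ to build a sequence $w_j\to z_2$ with $g_j^{-1}*w_j\to z_1$, contradicting properness. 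Note that one cannot simply quote Proposition~\ref{prop:unique-maximal} here, since a $\Gamma$-invariant set need not be $G$-invariant; you identified the right worry but your proposed repair does not close it.
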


The work is organized in the following way. In 
Section~\ref{sec:proof-thm-1} we prove 
Theorem~\ref{thm:disc-region}. In order to do this, 
we study the orbits of the action of $\semidirectprod$ in $\cpx^N$, 
the orbits foliate an open and dense set of $\cpx^N$, to prove 
this we study the action of 
$\reals$ in the imaginary part of $\cpx^N$ by the representation $A(t)$. 
We decompose the imaginary part of $\cpx^N$ in two linear subspaces, 
the stable subspace, where the 
orbits of the action of $\reals$ converge as $t \to \infty$ and 
the unstable subspace  where the orbits diverge to infinity. 
The main result of 
Section~\ref{sec:action-in-complex-plane} is Proposition~\ref{prop:Uplus-difeo}, with this result 
we prove Theorem~\ref{thm:disc-region} in Section~\ref{sec:action-cpN}, our 
tool is Lemma~\ref{lem:stable-diffeo} 
which is proved using techniques of Lyapunov stability. 
The Theorem~\ref{thm:limitset} is proved in
Section~\ref{sec:proof-thm-1.2}, where we determine 
conditions on the representation $A(t)$ for the existence of  
lattices of $\semidirectprod$ (see  
Section~\ref{sec:lattices-split-solvable}). 
For groups $\semidirectprod$ 
admitting a lattice we study the dynamics of the action in the
complement of  
the open sets $\Omega_i$ of Theorem~\ref{thm:disc-region}, 
defined at the end of Section~\ref{sec:action-in-complex-plane}, 
our tool is the fact that hyperbolic toral automorphisms have 
dense orbits in the torus, we show this property extends to lattices 
of $\semidirectprod$ in Lemma~\ref{lem:xg-loxodromic-R} and 
use this result to prove the Theorem~\ref{thm:limitset}.

\section{Proof of theorem~\ref{thm:disc-region}}\label{sec:proof-thm-1}

To prove the Theorem~\ref{thm:disc-region} we will focus 
on the action of $\semidirectprod$ on the projective space induced 
by the representation~\eqref{eq:g-decomp}. Any vector we consider will
be a column vector.  
If $g = (b,t) \in \semidirectprod$ and $[z] \in \proj^N_\cpx$ this action is $g * [z] = [\rho(g)z]$. We distinguish to invariant sets where the 
action is easy to describe. Let $U \subset \proj^N_\cpx$ be the
set of points  with projective coordinates $[z_1:\cdots:z_N:1]$,
$U$ is an invariant open set such that in the chart  $(U,\phi)$, 
$\phi:U\to \cpx^N$,  
  \begin{equation}\label{eq:the-chart-phi}
    \phi([z_1:\ldots:z_N:1]) = (z_1,\ldots, z_N)^T,
  \end{equation}
  and the action of $\semidirectprod$ in
  $U$ is equivariant to the action 
  $(\semidirectprod)
  \times \cpx^N \to \cpx^N$, $((b, t), z) \mapsto 
  A(t) z + b$. On the other hand the complement of $U$
   is an invariant closed set such that 
  for any $g = (b, 0) \in \semidirectprod$ and 
  $[z]  \in \proj^N_\cpx\setminus U$ with projective coordinates 
  $[z_1:\ldots:z_N:0]$
  we have $g*[z] = [z]$, thus points in the complement of $U$ are of 
  infinite isotropy, hence if the action is proper in an open set 
  $\Omega$, this set should be contained in $U$. 
  For the rest of the section we will focus on this action on $U$ 
  identified with the complex space as described above, our aim is to
  determine maximal open sets of $\cpx^N$ for which the action is
  proper.
  \subsection{The action on the complex space}\label{sec:action-in-complex-plane}
  We use the following well known equivalent definition 
  of proper group action~\cite[Prob.~12-19.]{lee_introduction_2011}
\begin{proposition}\label{lem:prop-action-equiv}
  If $G \times \Omega \to \Omega$, $(g,z)\mapsto g*z$ is 
  a differentiable action of a Lie group $G$ into a $G$-invariant subset $\Omega$ of a manifold, 
  the action is proper if and only if for any sequence $(g_n,z_n)$ in $G\times\Omega$ such
  that $z_n$ and $g_n*z_n$ are convergent, there exists a convergent 
  subsequence $g_{n(k)}$. 
\end{proposition}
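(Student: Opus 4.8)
The plan is to unwind the definition of a proper action in terms of the map $\Theta\colon G\times\Omega\to\Omega\times\Omega$, $\Theta(g,z)=(g*z,z)$, which by definition is proper precisely when the preimage of every compact set is compact. Since $G$ is a Lie group and $\Omega$ is (an open subset of) a manifold, the product $G\times\Omega$ is itself a manifold, hence second countable and metrizable; throughout I may therefore replace compactness by sequential compactness and freely extract convergent subsequences. This metrizability is the technical fact that makes the two formulations match, and it is the point I would be most careful about.

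For the forward implication, I would assume $\Theta$ is proper and take a sequence $(g_n,z_n)$ with $z_n\to z$ and $g_n*z_n\to w$. Then the set $K=\{(g_n*z_n,z_n):n\in\mathbb{N}\}\cup\{(w,z)\}$ is compact in $\Omega\times\Omega$, being a convergent sequence together with its limit, so $\Theta^{-1}(K)$ is compact. Since each $(g_n,z_n)$ lies in $\Theta^{-1}(K)$, sequential compactness yields a subsequence $(g_{n(k)},z_{n(k)})$ that converges; in particular $g_{n(k)}$ converges, which is exactly what the criterion demands.

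For the converse I would show directly that $\Theta^{-1}(K)$ is sequentially compact for every compact $K\subset\Omega\times\Omega$. Given a sequence $(g_n,z_n)$ in $\Theta^{-1}(K)$, compactness of $K$ furnishes a subsequence along which $(g_n*z_n,z_n)\to(w,z)\in K$; the hypothesis then provides a further subsequence along which $g_n\to g$. Along this last subsequence $z_n\to z$ as well, so $(g_n,z_n)\to(g,z)$, and continuity of $\Theta$ forces $(g*z,z)=(w,z)\in K$, i.e.\ $(g,z)\in\Theta^{-1}(K)$. Hence every sequence in $\Theta^{-1}(K)$ has a subsequence converging to a point of $\Theta^{-1}(K)$, so $\Theta^{-1}(K)$ is compact and $\Theta$ is proper.

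The only genuine subtlety is the repeated passage between \emph{compact} and \emph{sequentially compact}, together with the bookkeeping of nested subsequences in both directions; both are harmless because all the spaces involved are manifolds and therefore metrizable. Everything else is a direct chase through the definition of a proper map, so I do not expect any serious obstacle beyond keeping the subsequence extraction honest.
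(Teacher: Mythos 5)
Your argument is correct. The paper itself gives no proof of this proposition: it is stated as a well-known equivalence and attributed to Lee's textbook (Problem 12--19 of \emph{Introduction to Smooth Manifolds}), so there is no in-paper argument to compare against. Your write-up is the standard one: pass to the map $\Theta(g,z)=(g*z,z)$, use that $G\times\Omega$ and $\Omega\times\Omega$ are metrizable to trade compactness for sequential compactness, and chase sequences in both directions. Both directions are sound; in the converse you correctly close the loop by using continuity of $\Theta$ to see that the limit point lands back in $\Theta^{-1}(K)$. One small remark: the statement allows $\Omega$ to be an arbitrary $G$-invariant subset of a manifold, not necessarily open, but this costs nothing since any subspace of a metrizable space is metrizable, so your appeal to sequential compactness goes through unchanged.
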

%
\subsubsection{Globally asymptotically stable linear  systems}\label{subsec:stable-linear-system}
Recall the representation $A: \reals \to \gl(N,\reals)$ has no 
eigenvalue of unit length, let 
$M = \dot A(0)$, then $M$ is a real matrix whose eigenvalues all have 
non zero real part.  
For any $x \in \reals^N$ the  orbits $x(t) = A(t)\,x_0$ are solutions 
to the linear autonomous system,
\begin{equation}\label{eq:linear-autonomous}
    \dot x = M x.
\end{equation}
Assume all the eigenvalues of $M$ have negative real part, 
dynamics of this system is 
well understood from the theory of Lyapunov stability, it is known 
that for this system the origin is exponentially 
stable~\cite[Thm.~8.2]{hespanha2018linear}: there exist positive constants $C$ and 
$\lambda$ such that for every initial condition $x(0) \in \reals^N$ and $t \geq 0$, 
\begin{equation}\label{eq:exponential-stability}
    |x(t)| \leq C\, e^{-\lambda t}\,|x(0)|,
\end{equation}
where $|\cdot|$ is the Euclidean norm, 
moreover, there is a unique symmetric, positive definite matrix 
$P \in \reals^{N\times N}$ such that it is the solution 
to the Lyapunov equation~\cite[Thm.~8.2]{hespanha2018linear},
\begin{equation}\label{eq:lyapunov}
    PM + M^TP = -I.
\end{equation}
\begin{lemma}\label{lem:stable-diffeo}
If all the eigenvalues of the matrix $M\in\reals^{N\times N}$ have negative 
real part, there is an inner product $\langle \cdot, \cdot\rangle$ in 
$\reals^N$ such that if $S$ is the $N-1$ sphere 
$\{x \in \reals^N\,:\,\langle x, x\rangle = 1\}$, the map 
\begin{equation}\label{eq:stable-diffeo}
    \reals \times S \to \reals^N\setminus\{0\}, \qquad (t, x) \mapsto \exp(t M)\,x, 
\end{equation}
is a diffeomorphism.
\end{lemma}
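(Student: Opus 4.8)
The plan is to build the inner product from the Lyapunov matrix $P$ of~\eqref{eq:lyapunov} and to exploit the fact that the associated quadratic form is a strict Lyapunov function for the flow of $M$. Concretely, I would set $\langle x, y\rangle = x^T P y$; since $P$ is symmetric and positive definite this is a genuine inner product, and $S = \{x : x^T P x = 1\}$ is its unit sphere. Writing $V(x) = \langle x, x\rangle$, the engine of the whole argument is the computation
\[
\frac{d}{dt}\,V\bigl(\exp(tM)x\bigr) = x(t)^T\bigl(M^T P + P M\bigr)x(t) = -\abs{x(t)}^2 < 0
\]
for $x(t) = \exp(tM)x \neq 0$, which follows at once from~\eqref{eq:lyapunov}. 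Thus $V$ strictly decreases along every nonzero orbit of the flow.

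Next I would establish bijectivity. The map is smooth because $(t,x)\mapsto \exp(tM)x$ is. To get surjectivity and injectivity, fix $y \in \reals^N\setminus\{0\}$ and study $f(s) = V(\exp(sM)y)$; by the computation above $f$ is strictly decreasing. Using the equivalence of $V$ with the Euclidean norm, the differential inequality $\dot f \le -f/c$ (for a constant $c>0$ with $V \le c\,\abs{\cdot}^2$) forces $f(s) \to 0$ as $s \to +\infty$ — this is just the exponential stability~\eqref{eq:exponential-stability} — and $f(s) \to +\infty$ as $s \to -\infty$. Hence there is a unique $s_0 = s_0(y)$ with $f(s_0) = 1$, and setting $t = -s_0$, $x = \exp(s_0 M)y$ yields the unique pair in $\reals\times S$ with $\exp(tM)x = y$.

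Finally I would upgrade this bijection to a diffeomorphism by checking that the differential is everywhere nonsingular and invoking the inverse function theorem together with bijectivity. At $(t,x)$ with $y = \exp(tM)x$, the image of the differential is $\mathrm{span}\{My\} + \exp(tM)(T_xS)$. Since $M$ commutes with $\exp(tM)$ one has $My = \exp(tM)(Mx)$, so nonsingularity reduces to $Mx \notin T_xS$, that is, to $\langle x, Mx\rangle \neq 0$. But $\langle x, Mx\rangle = x^T P M x = \tfrac12\,x^T(PM + M^T P)x = -\tfrac12\,\abs{x}^2 < 0$, which is again exactly the Lyapunov decrease, so $Mx$ is transverse to $S$ and the differential is an isomorphism at each point. (Equivalently, one may apply the implicit function theorem to $V(\exp(sM)y)=1$, whose $s$-derivative $-\abs{\exp(sM)y}^2$ never vanishes, to obtain smoothness of $s_0(y)$ and hence of the inverse $y \mapsto (-s_0(y),\,\exp(s_0(y)M)y)$.)

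I expect the diffeomorphism step, rather than the bijection, to be the main obstacle: one must rule out the flow direction $My$ becoming tangent to the sphere, and the content that makes this work — that $Mx$ always crosses $S$ strictly inward — is precisely the strict Lyapunov decrease. Thus the positive definiteness of $P$ and equation~\eqref{eq:lyapunov} enter in an essential way at both the global level (monotonicity of $f$) and the infinitesimal level (transversality of the differential).
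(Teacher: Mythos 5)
Your proposal is correct and follows essentially the same route as the paper: the inner product $\langle x,y\rangle = x^TPy$ from the Lyapunov equation~\eqref{eq:lyapunov}, strict monotonicity of $V$ along orbits together with the limits at $\pm\infty$ to get bijectivity, and the transversality $\langle x, Mx\rangle<0$ to make the differential an isomorphism before invoking the inverse function theorem. The only cosmetic difference is that you obtain $f(s)\to\infty$ as $s\to-\infty$ from a differential inequality rather than from the reversed exponential estimate~\eqref{eq:exponential-stability}, which is immaterial.
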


\begin{proof}
Let $\psi: \reals \times S \to \reals^N\setminus\{0\}$ be the 
differentiable map defined in~\eqref{eq:stable-diffeo}, we show 
it is bijective and a local diffeomorphism, thus by the inverse function 
theorem, it is a global diffeomorphism. 
Let $P \in \reals^{N\times N}$ be the solution to the Lyapunov 
equation~\eqref{eq:lyapunov} and define the inner product 
$\langle x, y \rangle = x^T P y$, 
the function $V(x) = \langle x, x \rangle$ is a Lyapunov function for the system, 
this means that 
$V(x) > 0$ for $x \neq 0$ and for any solution $x(t)$ 
to~\eqref{eq:linear-autonomous}, $\frac{d}{dt}\,V(x(t)) < 0$. Let $f(t) = V(x(t))$, 
by~\eqref{eq:exponential-stability}, $\lim_{t\to \infty} f(t) = 0$.  
Since the system~\eqref{eq:linear-autonomous} is linear, 
equation~\eqref{eq:exponential-stability} implies 
\begin{equation}
    \frac{1}{C}\,e^{\lambda t} |x(0)| \leq |x(-t)|, \qquad t \geq 0, 
\end{equation}
hence $\lim_{t\to -\infty} f(t) = \infty$. Since $f(t) = \langle x(t), x(t) \rangle$, we conclude that the curves $x(t)$ intersect exactly once the 
sphere $S$ for any initial condition $x(0)$, this shows $\psi$ is bijective.  
Finally we show $d\psi_{(t,x)}$ is an isomorphism for any $(t,x) \in \reals \times S$, hence it also is a local diffeomorphism. 
  Let $\{v_1,\ldots, v_{N-1}\}$ be a basis for $T_xS$, 
$\langle v_j, x \rangle = 0$, $j = 1,\ldots, N-1$ then  
$\{\partial_t, v_1, \ldots, v_{N-1}\}$ is a basis for  
$T_{(t,x)}(\reals\times S)$, 
since $A(t) = \exp(tM)$, this basis 
is mapped by $d\psi_{(t,x)}$ onto 
$\{A(t)Mx, A(t)v_1,\ldots, A(t)v_{N-1}\}$. On the other hand the  
vector $Mx$ is transversal to $T_xS \subset T_x\reals^N$ because 
\begin{equation}
\langle x, Mx \rangle = \frac{1}{2}\left.\frac{d}{dt}\right\rvert_{t=0}\langle \exp(tM)x, \exp(tM)x \rangle = \frac{1}{2}\left.\frac{d}{dt}\right\rvert_{t=0}f(t) < 0,
\end{equation}
hence $\{Mx, v_1,\ldots, v_{N-1}\}$ is a basis of $\reals^N$, 
since $A(t)$ is an isomorphism we  conclude $d\psi_{(t,x)}$ maps 
a basis of $T_{(t,x)}(\reals \times S)$ onto a basis of 
$T_{\psi(t,x)}\reals^N$ and $\psi$ is a diffeomorphism.
 \end{proof}

\begin{remark}\label{rmk:unstable-diffeo}
If all the eigenvalues of $M$ have positive real part, by Lemma~\ref{lem:stable-diffeo} 
$(t,x) \mapsto \exp(-tM)\,x$ is a diffeomorphism of $\reals \times S$ onto 
$\reals^N\setminus\{0\}$, since the map $(t,x) \mapsto (-t,x)$ is a diffeomorphism  
$\reals \times S \to \reals \times S$, this implies Lemma~\ref{lem:stable-diffeo} also 
holds for $M$ in this case. 
\end{remark}

  \subsubsection{Stable and unstable subspaces of hyperbolic linear systems}\label{subsec:exp-jordan-form}
If $M\in\reals^{N\times N}$ is a matrix such that each eigenvalue of $M$ has non zero real 
part, there is a decomposition 
of the Euclidean space in two invariant subspaces, $\reals^N =
E^s\oplus E^u$,  
the stable and unstable
subspaces respectively, such that if we think of the matrix $M$ as a
linear operator,  
the restriction of $M$ to 
$E^s$ has only eigenvalues with negative real part whereas the restriction 
of $M$ to $E^u$ has only eigenvalues with positive real part. 
If $E^s$ is not trivial, 
by Lemma~\ref{lem:stable-diffeo}
there is a sphere in $E^s$ and 
a well defined diffeomorphism given by the Equation~\eqref{eq:stable-diffeo}
such that the action of $\exp(t M)$ onto $E^s\setminus\{0\}$ is equivariant 
to $\reals
\times (\reals \times S) \to \reals \times S$, $(s, (t,x))
\mapsto (s + t, x)$ and likewise for $E^u$ 
by remark~\ref{rmk:unstable-diffeo}. We will use these 
diffeomorphisms to study the action of $\semidirectprod$. 
%
Our aim is to prove the
existence of two invariant open sets 
$U^s, U^u\subset \cpx^N$ such that
the action of $\semidirectprod$ on each space is proper. We
start by decomposing 
$\cpx^N$ in the direct sum of real subspaces $\reals^N\oplus
iE^s\oplus iE^u$. 
We also define the real 
projections  $\pi_s:\cpx^N\to E^s$ and $\pi_u:\cpx^N\to E^u$. If $E^s$
is not trivial, we define
$U^- = \cpx^N\setminus \operatorname{ker}\pi_s$, if $E^u$ is not
trivial, we define $U^+$ similarly. 
\begin{proposition}\label{prop:Uplus-difeo}
Let $G_A = \semidirectprod$, where 
$A(t)$ is defined as in the Theorem~\ref{thm:disc-region}. 
If $\dim E^s > 0$ there is a diffeomorphism
$U^- \to G_A \times 
\reals^{N - N_s} \times \sphere^{N_s-1}$ where $N_s = \dim E^s$,
$\sphere^{N_s-1} \subset \reals^{N_s}$ is the unit sphere 
and such that the action of $G_A$ is
equivariant to the action 
\begin{equation*}
 G_A \times (G_A \times \reals^{N - N_s} \times \sphere^{N_s-1})
\to G_A \times \reals^{N - N_s} \times \sphere^{N_s-1},   
\end{equation*}
given by $(g, (h, x,y))
\mapsto (gh, x, y)$. 
\end{proposition}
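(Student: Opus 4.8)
The plan is to exhibit $U^-$ as a single orbit type, realizing it as the image of $G_A$ times an explicit global slice so that the $G_A$-action becomes left translation. First I would record how the action reads in the real decomposition $\cpx^N = \reals^N \oplus iE^s \oplus iE^u$: writing $z = a + is + iu$ with $a \in \reals^N$, $s \in E^s$, $u \in E^u$, an element $(b,t) \in G_A$ sends $z$ to $(A(t)a + b) + i\,A(t)s + i\,A(t)u$, since $A(t) = \exp(tM)$ preserves both $E^s$ and $E^u$ while $b$ only modifies the real part. In particular $\pi_s(g*z) = A(t)\pi_s(z)$, so $U^- = \{z : \pi_s(z) \neq 0\}$ is invariant, and on $U^-$ the stable imaginary part $s = \pi_s(z)$ is a nonzero vector of $E^s$.

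Next I would build the slice. Because $M|_{E^s}$ has all eigenvalues with negative real part, Lemma~\ref{lem:stable-diffeo} supplies an inner product on $E^s$, its unit sphere $\sphere^{N_s-1}$, and a diffeomorphism $\Psi:\reals\times\sphere^{N_s-1}\to E^s\setminus\{0\}$, $\Psi(\tau,v) = A(\tau)v$. I then set the slice $Y = \{\,i(v+w) : v \in \sphere^{N_s-1},\ w \in E^u\,\}$, a smooth submanifold visibly diffeomorphic to $\sphere^{N_s-1}\times E^u \cong \reals^{N-N_s}\times\sphere^{N_s-1}$, and consider the restriction of the action map $\Phi: G_A \times Y \to \cpx^N$, $\Phi(g,z_0) = g*z_0$. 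Its image lands in $U^-$, since the stable imaginary part of $g*i(v+w)$ equals $A(t)v \neq 0$.

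The heart of the matter is that $\Phi$ is a bijection onto $U^-$, and this is exactly where Lemma~\ref{lem:stable-diffeo} does the work. Given $z = a + is + iu \in U^-$ I solve $g*z_0 = z$ with $g = (b,t)$ and $z_0 = i(v+w)$: matching the stable imaginary parts gives $s = A(t)v$, which by injectivity of $\Psi$ forces $t = \tau$ and $v$ to be the unique sphere representative of $s$; matching real parts gives $b = a$; and matching the unstable parts gives $w = A(-t)u \in E^u$. Every quantity is uniquely determined, so $\Phi$ is bijective, and each assignment ($s\mapsto(\tau,v)$ via $\Psi^{-1}$, then $u\mapsto A(-\tau)u$, and $a$) is smooth, so $\Phi^{-1}$ is smooth; since $\Phi$ is smooth this makes it a diffeomorphism. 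Composing $\Phi^{-1}$ with the identification $Y \cong \reals^{N-N_s}\times\sphere^{N_s-1}$ yields the claimed diffeomorphism $U^- \to G_A \times \reals^{N-N_s}\times\sphere^{N_s-1}$, and equivariance is automatic because $h*\Phi(g,z_0) = \Phi(hg,z_0)$, so the $G_A$-action transports to left translation $(g,(h,x,y))\mapsto(gh,x,y)$.

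I expect the only genuine difficulty to be already packaged inside Lemma~\ref{lem:stable-diffeo}: the clean, smooth separation of the stable imaginary vector $\pi_s(z)$ into a ``time'' coordinate $\tau$ and a ``direction'' $v \in \sphere^{N_s-1}$ is precisely what turns $Y$ into a global transversal and makes the map a diffeomorphism rather than a mere continuous bijection. Everything else is bookkeeping of the block action of $A(t)$ on $E^s\oplus E^u$; in particular the degenerate case $E^u = \{0\}$ is covered verbatim, the Euclidean factor $\reals^{N-N_s}$ then being a point. Finally, I would note that freeness and properness of the action on $U^-$ need not be argued separately, as they follow from the equivariance with left translation of $G_A$ on itself.
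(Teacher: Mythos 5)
Your proof is correct and follows essentially the same route as the paper: your map $\Phi(g,i(v+w)) = g*(iv+iw)$ on $G_A\times\sphere^{N_s-1}\times E^u$ is exactly the paper's $\psi^-$, and your bijectivity argument via Lemma~\ref{lem:stable-diffeo} matches the paper's Lemma~\ref{lem:psi-is-bijective} step for step. The only (harmless) deviation is the final smoothness step: the paper checks that $d\psi^-$ is an isomorphism at every point and invokes the inverse function theorem, whereas you exhibit the inverse explicitly as $z = a + is + iu \mapsto \bigl(a,\Psi^{-1}(s),A(-\tau)u\bigr)$, which is equally valid since $\Psi^{-1}$ is smooth by Lemma~\ref{lem:stable-diffeo}.
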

If $\dim E^u > 0$ a similar result holds for
$U^+$. In order to prove Proposition~\ref{prop:Uplus-difeo} we define an 
explicit diffeomorphism and prove that it is in fact a diffeomorphism
in the lemmas~\ref{lem:psi-is-bijective} 
and~\ref{lem:psi-s-diffeo}, the proof
for $U^+$ is identical. Let $\psi^-: G_A \times S \times E^u \to U^-$, 
be the map 
\begin{equation}
\label{eq:psi-plus}
\psi^-(g, x, y) = g*(ix + iy),
\end{equation}
where $S$ is the stable unit sphere as defined
in Lemma~\ref{lem:stable-diffeo}, if $g = (b,t) \in \semidirectprod$,  
then $\psi^-(g,x,y) = b + i(A(t)x + A(t)y)$, where $A(t)x\in
E^s$, $A(t)y\in E^u$. 
\begin{lemma}\label{lem:psi-is-bijective}
$\psi^-$ is bijective.
\end{lemma}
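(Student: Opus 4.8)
The plan is to invert $\psi^-$ explicitly by reading off the three components of a point of $U^-$ under the real direct sum decomposition $\cpx^N = \reals^N \oplus iE^s \oplus iE^u$. Writing $g = (b,t)$ and recalling that $A(t)$ is a real matrix preserving both $E^s$ and $E^u$, we have
\begin{equation*}
\psi^-(g,x,y) = b + i A(t)x + i A(t)y, \qquad b \in \reals^N,\ A(t)x \in E^s,\ A(t)y \in E^u.
\end{equation*}
Thus the real part of $\psi^-(g,x,y)$ is $b$, its imaginary stable part (the image under $\pi_s$) is $A(t)x$, and its imaginary unstable part is $A(t)y$. Given a target $z = a + i(c_s + c_u) \in U^-$ with $a \in \reals^N$, $c_s \in E^s$, $c_u \in E^u$, the condition $z \in U^-$ means precisely $\pi_s(z) = c_s \neq 0$. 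I would show that the equation $\psi^-(g,x,y) = z$ has a unique solution, which yields both injectivity and surjectivity at once.

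The real part immediately forces $b = a$. For the stable component I would invoke Lemma~\ref{lem:stable-diffeo} applied to the restriction $M|_{E^s}$, whose eigenvalues all have negative real part: it gives that $(t,x) \mapsto \exp(tM|_{E^s})\,x = A(t)x$ is a diffeomorphism $\reals \times S \to E^s\setminus\{0\}$. Since $c_s \neq 0$, there is exactly one pair $(t,x) \in \reals \times S$ with $A(t)x = c_s$; this is the step where the hypothesis defining $U^-$ is essential, as it guarantees the stable imaginary part is nonzero and hence lies in the image of this diffeomorphism. With $t$ now determined, the unstable equation $A(t)y = c_u$ has the unique solution $y = A(-t)c_u \in E^u$, because $A(t)$ restricts to an isomorphism of $E^u$.

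I do not expect a serious obstacle here: the content is entirely a matter of separating the three coordinate blocks and citing the diffeomorphism of Lemma~\ref{lem:stable-diffeo}. The only point requiring care is the order of extraction—the parameter $t$ must be recovered from the stable block, where nonvanishing is guaranteed by $z \in U^-$, rather than from the unstable block, since $c_u$ may vanish; once $t$ is fixed by the stable data, the unstable coordinate $y$ is pinned down by mere linear invertibility of $A(t)$ on $E^u$, and the real coordinate $b$ is pinned down directly.
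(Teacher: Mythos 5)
Your proof is correct and follows essentially the same route as the paper's: decompose $z\in U^-$ as $b+ic_s+ic_u$ along $\reals^N\oplus iE^s\oplus iE^u$, recover $(t,x)$ uniquely from the nonzero stable block via Lemma~\ref{lem:stable-diffeo}, and then solve for $y$ using invertibility of $A(t)$ on $E^u$. The only cosmetic difference is that you establish existence and uniqueness of the preimage in one pass, while the paper proves surjectivity and injectivity as two separate computations.
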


\begin{proof}
Any $z \in U^-$ can be decomposed uniquely as $z
= b + ix + iy$ for some $b \in \reals^N$, $x\in E^s\setminus\{0\}$, 
$y\in E^u$, by Lemma~\ref{lem:stable-diffeo} there is exactly one pair 
$(t,s)\in \reals\times S$ such that $A(t)s = x$. For this pair let 
$y' = A(-t)y$ hence $\psi^-((b,t), s, y') = z$,
thus $\psi^-$ is surjective. If $(g_j,x_j,y_j)$, $j=1,2$ are a pair of points
such that $\psi^-(g_1,x_1,y_1)=\psi^-(g_2,x_2,y_2)$,  where $g_j =
(b_j, t_j)$, then
\begin{equation*}
\label{eq:g2-1-g1-point}
g_2^{-1}g_1*(ix_1 + iy_1) = ix_2 + iy_2,
\end{equation*}
which implies,
\begin{align*}
b_1 - b_2 &= 0, & A(t_1 - t_2) x_1 &= x_2, & A(t_1 - t_2)y_1 &= y_2,
\end{align*}
since $x_j \in S$, $j=1,2$, by Lemma~\ref{lem:stable-diffeo} the second 
condition implies $t_1 - t_2 = 0$ and $x_1 = x_2$ which  
implies $y_1 = y_2$, thus $(g_1,x_1,y_1)=(g_2,x_2,y_2)$
and $\psi^-$ is injective. 
\end{proof}

\begin{lemma}\label{lem:psi-s-diffeo}
$\psi^-$ is a diffeomorphism.
\end{lemma}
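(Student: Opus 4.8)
The plan is to mirror the strategy used in Lemma~\ref{lem:stable-diffeo}: since $\psi^-$ is already known to be bijective by Lemma~\ref{lem:psi-is-bijective}, the inverse function theorem reduces the claim to showing that the differential $d\psi^-_{(g,x,y)}$ is an isomorphism at every point $(g,x,y)\in G_A\times S\times E^u$. First I would record the dimension count, which confirms this is the right target: $\dim G_A = N+1$, $\dim S = N_s - 1$ and $\dim E^u = N - N_s$, so the domain has real dimension $(N+1)+(N_s-1)+(N-N_s) = 2N = \dim_{\reals}\cpx^N$. Hence it is enough to prove that $d\psi^-_{(g,x,y)}$ maps a basis of the tangent space to a linearly independent family in $\cpx^N$.

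Next I would compute the differential explicitly. Writing $g=(b,t)$ and using $A(t)=\exp(tM)$ so that $\dot A(t)=A(t)M$, the map $\psi^-(g,x,y)=b+i\bigl(A(t)x+A(t)y\bigr)$ has partial derivatives $\partial_{b_j}\psi^- = e_j$ (a purely real direction in $\reals^N\subset\cpx^N$), $\partial_t\psi^- = iA(t)M(x+y)$, and, in the directions $v\in T_xS$ and $w\in E^u$, the values $iA(t)v$ and $iA(t)w$ respectively. Choosing a basis $\{v_1,\dots,v_{N_s-1}\}$ of $T_xS$ and a basis $\{w_1,\dots,w_{N-N_s}\}$ of $E^u$, the image of $d\psi^-_{(g,x,y)}$ is therefore the real span of $\{e_1,\dots,e_N\}$ together with the vectors obtained by applying $iA(t)$ to $\{M(x+y),\,v_1,\dots,v_{N_s-1},\,w_1,\dots,w_{N-N_s}\}$.

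Decomposing $\cpx^N = \reals^N\oplus i\reals^N$, the vectors $e_j$ already span the real summand $\reals^N$, so the differential is onto exactly when the imaginary vectors span $i\reals^N$. Because $A(t)$ is an isomorphism and multiplication by $i$ is an $\reals$-linear isomorphism $\reals^N\to i\reals^N$, this is equivalent to requiring that the $N$ real vectors $\{M(x+y),\,v_1,\dots,v_{N_s-1},\,w_1,\dots,w_{N-N_s}\}$ form a basis of $\reals^N=E^s\oplus E^u$. Here I would invoke the $M$-invariance of the splitting: since $x\in E^s$ we have $Mx\in E^s$, while $y\in E^u$ gives $My\in E^u$, so $M(x+y)=Mx+My$ has $E^s$-component $Mx$ and $E^u$-component $My$. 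As the $w_k$ already span $E^u$, one may subtract off $My$, and the whole family spans $\reals^N$ if and only if $\{Mx,\,v_1,\dots,v_{N_s-1}\}$ spans $E^s$.

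This last reduction is precisely the transversality observation from the proof of Lemma~\ref{lem:stable-diffeo}, now applied to the restriction $M|_{E^s}$, all of whose eigenvalues have negative real part. With respect to the Lyapunov inner product on $E^s$ defining $S$, one has $\langle x, Mx\rangle < 0$ for $x\in S$, so $Mx$ is transversal to $T_xS$ and $\{Mx,\,v_1,\dots,v_{N_s-1}\}$ is a basis of $E^s$. Combining this with the $w_k$ spanning $E^u$ shows $d\psi^-_{(g,x,y)}$ is an isomorphism, which completes the argument. I expect the main obstacle to be purely organizational, namely the bookkeeping across the $E^s\oplus E^u$ decomposition and the real/imaginary splitting of $\cpx^N$; once the differential is written out the conclusion collapses to the transversality fact already established for the stable subspace.
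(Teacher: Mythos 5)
Your proof is correct and follows essentially the same route as the paper: bijectivity from Lemma~\ref{lem:psi-is-bijective} plus the inverse function theorem, with the differential shown to be an isomorphism via the transversality $\langle x, Mx\rangle<0$ established in Lemma~\ref{lem:stable-diffeo}. In fact you treat one point more carefully than the paper does, namely that $\partial_t\psi^- = iA(t)M(x+y)$ has a component $iA(t)My$ in $iE^u$, which you correctly absorb using the span of the $E^u$-directions before reducing to the stable-subspace transversality.
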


\begin{proof}
We show that 
for any $p = (g,x,y) \in G_A\times S \times E^u$ the derivative
$d_p\psi^-$ is an isomorphism 
$T_p(G_A \times S \times E^u) \to T_{\psi^-(p)}U^-$. 
Let $z = \psi^-(p)$, if $g = (b, t)$, then $z = b + i(A(t)x + A(t)y)$, 
note that as real vector spaces,
\begin{align*}
 T_p(G_A \times S \times E^u) = T_gG_A \times T_x S \times T_yE^u, 
 \\
 T_{\psi^-(p)}U^- = T_b\reals^N \times T_{A(t)x} E^s \times T_{A(t)y} E^u,
\end{align*}
then $d\psi^-$
maps $\partial_{b_1},\ldots, \partial_{b_N}$ onto a basis for
$T_b\reals^N$ and since for any $t$ 
the restriction $A(t)|_{E^u}$ is an isomorphism, $d_p\psi^-$ also
maps $T_yE^u$ isomorphically onto $T_{A(t) y}E^u$. 
By Lemma~\ref{lem:stable-diffeo} if $v_1,\ldots, v_{N_s-1}$ is a basis for
$T_xS$, then $d_p\psi^-$ maps the linearly independent vectors
$\partial_t, v_1,\ldots, 
v_{N_s-1}$ to a basis of $T_{A(t)x}E^s$, thus
$d_p\psi^-$ is an isomorphism. By the inverse function theorem,
$\psi^-$ is a diffeomorphism since it is bijective by 
Lemma~\ref{lem:psi-is-bijective}.
\end{proof}

\begin{proof}[Proof of Proposition~\ref{prop:Uplus-difeo}]
Lemmas~\ref{lem:psi-is-bijective} 
and~\ref{lem:psi-s-diffeo}
 prove the first part of the proposition once we 
choose a basis for $E^s$ in order to induce a diffeomorphism 
$S \to \sphere^{N_s - 1}$. 
The final claim of the proposition follows from
the Equation~\eqref{eq:psi-plus}. 
\end{proof}

By Proposition~\ref{prop:Uplus-difeo}, if
$U^-\neq\emptyset$ the
action is proper and free in this set. Another consequence is that
$U^-$ is connected if $N_s > 1$ 
and it has two connected components if and only if $N_s = 1$. Thus for
any lattice $\Gamma \subset \semidirectprod$, the quotient space 
$\Gamma\setminus U^-$ has at most two connected components and it
is diffeomorphic to 
$(\Gamma\setminus \semidirectprod) \times \reals^{N - N_s}
\times S^{N_s-1}$. Similar 
assertions hold for $U^+$. 
We finally show that the sets  $U^{\pm}$ are the only invariant
maximal open sets for the action of $\semidirectprod$.
\begin{proposition}\label{prop:unique-maximal}
If $U \subset \cpx^N$ is an invariant open set where the action is
proper, then $U \subset U^+$ or
$U \subset U^-$.
\end{proposition}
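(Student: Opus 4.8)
The plan is to prove the contrapositive via the sequential characterization of properness in Proposition~\ref{lem:prop-action-equiv}. Writing a point of $\cpx^N$ in the real coordinates of the decomposition $\reals^N\oplus iE^s\oplus iE^u$ as $z = c + ix + iy$ with $c\in\reals^N$, $x\in E^s$, $y\in E^u$, the projections are $\pi_s(z)=x$ and $\pi_u(z)=y$, so that $U^- = \{z : x\neq 0\}$ and $U^+ = \{z : y\neq 0\}$. Thus $U\not\subset U^-$ means $U$ meets $\ker\pi_s$, and $U\not\subset U^+$ means $U$ meets $\ker\pi_u$. I will show that if both occur simultaneously then the action on $U$ fails the criterion of Proposition~\ref{lem:prop-action-equiv}, so that properness forces one of the two inclusions. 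Accordingly, assume there are points $z^{(1)} = c_1 + iy_1 \in U\cap\ker\pi_s$ (stable part zero, $y_1\in E^u$) and $z^{(2)} = c_2 + ix_2 \in U\cap\ker\pi_u$ (unstable part zero, $x_2\in E^s$).

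Next I would build an explicit escaping sequence driven by the flow in the $t$-direction. Fix any $t_n\to+\infty$ and, recalling that $g*z = A(t)z+b$ for $g=(b,t)$, set
\[
z_n = c_2 + i\bigl(x_2 + A(-t_n)y_1\bigr), \qquad g_n = \bigl(c_1 - A(t_n)c_2,\; t_n\bigr).
\]
A direct computation gives $g_n * z_n = c_1 + i\bigl(A(t_n)x_2 + y_1\bigr)$, the term $A(t_n)c_2$ being cancelled by the translation part of $g_n$. On $E^s$ the estimate~\eqref{eq:exponential-stability} gives $A(t_n)x_2\to 0$, and on $E^u$ the same estimate applied to $-M|_{E^u}$ (as in Remark~\ref{rmk:unstable-diffeo}) gives $A(-t_n)y_1\to 0$. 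Hence $z_n\to z^{(2)}\in U$ and $g_n*z_n\to z^{(1)}\in U$, so both sequences lie in $U$ for large $n$ and converge there. Since the second coordinate of $g_n$ is $t_n\to+\infty$ and the projection $G_A\to\reals$ is continuous, $(g_n)$ has no convergent subsequence; by Proposition~\ref{lem:prop-action-equiv} the action on $U$ is not proper, which is the desired contradiction.

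Finally I would dispose of the degenerate cases where one of the subspaces is trivial. If $E^u=\{0\}$ only $U^-$ is defined and $\ker\pi_s=\reals^N$, so the claim reduces to $U\cap\reals^N=\emptyset$; this holds for any proper action because every real point $c$ has non-compact isotropy, namely the one-parameter subgroup $\{((I-A(t))c,\,t):t\in\reals\}$, which fixes $c$. The case $E^s=\{0\}$ is symmetric. I expect the only real care to be in this bookkeeping and in checking that the witnesses $z^{(1)},z^{(2)}$ exist precisely when the two inclusions fail; the heart of the matter --- that the stable coordinate of $g_n*z_n$ is contracted to $0$ while the unstable coordinate of $z_n$ is recovered by solving backwards along the expanding flow --- is exactly what the Lyapunov estimate~\eqref{eq:exponential-stability} and Remark~\ref{rmk:unstable-diffeo} supply, so no genuinely new analytic input is needed.
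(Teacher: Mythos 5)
Your proof is correct and follows essentially the same route as the paper's: a sequence converging to one witness is pushed by group elements of the form $(\,\cdot\,,t_n)$ with $t_n\to\infty$ onto a sequence converging to the other witness, contradicting Proposition~\ref{lem:prop-action-equiv}. You are in fact slightly more careful than the paper, which normalizes the real parts away by invariance and glosses over the points of $\reals^N=\ker\pi_s\cap\ker\pi_u$ (asserting $U^+\cup U^-=\cpx^N\setminus\{0\}$), whereas your choice of $g_n$ and your isotropy remark cover that case explicitly.
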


\begin{proof}
If $U^+$ is empty then $U^- = \cpx^N\setminus\{0\}$ and the claim
follows, likewise if $U^-$ is empty. Thus we can assume none of the
open sets $U^{\pm}$ is empty. Since $U^+\cup U^- = \cpx^N\setminus \{0\}$, we can assume in
order to develop a contradiction the existence of $z_1\in U \cap (U^+\setminus U^-)$
and $z_2 \in U\cap(U^-\setminus U^+)$. Since $U$ is invariant under the action of
$\semidirectprod$, we can assume $z_j = iy_j$, $y_j\neq 0$, for
$j=1,2$. Let us define the sequence 
$w_{n} = iA(-n)y_1 + iy_2$,
then $w_{n} \to z_2$ as $n\to \infty$, moreover since $U$ is open,
for $n$ large enough the sequence is contained in $U$. On the other
hand if $g_n = (0,n) \in \semidirectprod$, then
$g_n*w_{n} = iy_1 + i A(n)y_2 \to y_1$ as $n\to \infty$, but the
sequence $g_n$ has no convergent subsequence, a contradiction since
the action in $U$ is proper. 
\end{proof}

%
\subsection{The action in the complex projective
  space}\label{sec:action-cpN} 
%
From Proposition~\ref{prop:Uplus-difeo} we know there is at least one
invariant open subset of $\cpx^N$ such that the action of 
$\semidirectprod$ is proper. In this section we prove this set
is identified with an open maximal set in the complex projective space where
the action is proper and conclude the first assertions of the
Theorem~\ref{thm:disc-region}.

\begin{proof}[Proof of Theorem~\ref{thm:disc-region}, parts~(\ref{it:thm-1-1}) and~(\ref{it:thm-1-2})]
Let $\tilde G$ be a lift of the group $G \subset \pgl(N+1,\cpx)$ 
conjugate by a matrix $\sigma \in \gl(N+1,\cpx)$  to the image of the 
representation $\rho$ given by Equation~\eqref{eq:g-decomp}:
For any $g \in \tilde G$ there is a $(b,t)\in\semidirectprod$
such that $\sigma^{-1}g\sigma = \rho(b,t)$, moreover $\sigma$ acts on
projective space as
$\sigma [z] = [\sigma z]$, $[z] \in\proj^N_{\cpx}$. 
\par{Proof of~(\ref{it:thm-1-1}):} Recall the chart $(U,\phi)$ is defined
by~\eqref{eq:the-chart-phi}, if
$U^-\neq \emptyset$ let us define the open set
$\Omega^- = \sigma\phi^{-1}(U^-)$,  
if $U^+\neq \emptyset$ we define $\Omega^+$
similarly. Each of the sets $\Omega^\pm$ is diffeomorphic to $U^{\pm}$
which on the other hand is diffeomorphic to a product of the form
$G_A\times \reals^{N-N_{\pm}}\times \sphere^{N_{\pm}-1}$ by
Proposition~\ref{prop:Uplus-difeo}, moreover $\Omega^{\pm}$ is
$G$-invariant because $U^\pm$ is invariant under the action of
$\semidirectprod$ and the $G$-action on $\Omega^\pm$ is equivariant to
the action of $\semidirectprod$ on $U^\pm$ given  
in Proposition~\ref{prop:Uplus-difeo}, defining $X$ as the preimage of 
$ \{e\}\times \reals^{N-N_{\pm}}\times \sphere^{N_{\pm}-1}$ proves the
first part of the Theorem~\ref{thm:disc-region}.
\par{Proof of~(\ref{it:thm-1-2}):} If $\Omega \subset \proj^N_{\cpx}$
is a $G$-invariant open set where the group acts properly, then
$\Omega \cap (\proj^N_\cpx\setminus \sigma U) = \emptyset$ 
because each point in $\proj^N_\cpx \setminus \sigma U$ has infinite 
isotropy, hence $\phi(\sigma^{-1} \Omega) \subset \cpx^N$ is an open
invariant set where the action of
$\semidirectprod$ is proper, by  
Proposition~\ref{prop:unique-maximal} 
$\phi(\sigma^{-1} \Omega) \subset U^\pm$,
hence $\Omega \subset \Omega^\pm$. 
\end{proof}

  \section{Proof of theorem~\ref{thm:limitset}}\label{sec:proof-thm-1.2}
\subsection{The lattices of a family of split solvable Lie groups}\label{sec:lattices-split-solvable}

Mosak and Moskowitz described the lattices 
of the semi-direct product $\semidirectprod$ in 
the case where the
matrix $A$ is diagonal and 
$A(t)\in \gsl(N,\reals)$ for all $t$~\cite{Mosak1997}.
If  $A:\reals \to \gl(N, \reals)$ is a faithful and 
closed representation there is  a matrix $M \in \gl(N,\reals)$
such that $A(t) = \exp(t\,M)$, by the result
of~\cite[p.~3]{raghunathan1972} the  
nilradical of $\semidirectprod$ is the identity
component of  the group $\{g\in \semidirectprod \,:\,
Ad_g\,\text{is unipotent}\}$. Since $A(t)$ is closed we can 
compute explicitly the adjoint representation of a matrix  
$g=\rho(b,t)$ in
$\gl(N+1,\reals)$ as given by~\eqref{eq:g-decomp}: For any 
 $(N+1)\times (N+1)$ real matrix $X$  
in the Lie algebra of the image of $\rho$, $Ad_g(X)=
gXg^{-1}$. Moreover, for any such  $X$ there is exactly one
$s\in\reals$ and $p\in\reals^N$ such that $X$ is a block matrix,
\begin{equation}
\label{eq:X-lie-algebra}
X = 
\begin{pmatrix}
  s\,M & p\\
  0     & 0
\end{pmatrix},
\end{equation}
hence, an element $g\in \semidirectprod$ is unipotent provided
the only $\lambda \in \cpx$ such that $Ad_gX = \lambda\,X$ has a 
non trivial solution is $\lambda = 1$. 
By a direct computation, this condition is equivalent
to require that for some $s \in \cpx$ and $p \in \cpx^N$,
\begin{align}
\label{eq:adg-eigenvalue-problem}
sM = \lambda sM, && -sMb + A(t)p = \lambda p.
\end{align}
Note that $M \neq 0$ because $A(t)$ cannot be a constant 
function.  
If $\lambda \neq 1$, by~\eqref{eq:adg-eigenvalue-problem} $s=0$ and since 
$X$ is not trivial, $p \neq 0$, hence  
$\lambda$ is 
an eigenvalue of $A(t)$ with eigenvector $p$. 
Thus the only possible eigenvalues of $Ad_g$ are $1$ 
and the eigenvalues of $A(t)$. 
\begin{proposition}\label{prop:semidirect-nilradical}
If $M$ has a nonzero eigenvalue, the nilradical of $\semidirectprod$ is
$\reals^N$, otherwise $\semidirectprod$ is nilpotent. 
\end{proposition}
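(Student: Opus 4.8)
The plan is to reduce the unipotency of $Ad_g$ to a condition on the $\reals$-coordinate $t$ alone and then identify the connected component of the identity of the resulting set. Starting from the analysis of~\eqref{eq:adg-eigenvalue-problem}, which shows that for $g=\rho(b,t)$ the eigenvalues of $Ad_g$ are $1$ together with the eigenvalues of $A(t)$, I conclude that $Ad_g$ is unipotent if and only if $A(t)$ is unipotent; note this places no constraint on $b$. Setting $Z = \{t\in\reals : A(t)\text{ is unipotent}\}$, the set $\{g\in\semidirectprod : Ad_g\text{ is unipotent}\}$ is exactly $\{\rho(b,t):b\in\reals^N,\ t\in Z\}$, which as a subspace of $\semidirectprod\cong\reals^{N+1}$ is $\reals^N\times Z$. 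By the characterization of the nilradical recalled above, the nilradical is the connected component of the identity of this set, so everything comes down to understanding the topology of $Z$.

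To that end I would examine $Z$ through the eigenvalues $\mu_1,\ldots,\mu_k$ of $M$. Since $A(t)=\exp(tM)$ has eigenvalues $\exp(t\mu_j)$, the matrix $A(t)$ is unipotent exactly when $\exp(t\mu_j)=1$ for every $j$, so $Z=\bigcap_j\ker\chi_j$ with $\chi_j(t)=\exp(t\mu_j)$ a continuous homomorphism $\reals\to\cpx^*$. If $M$ is nilpotent, every $\mu_j=0$, each $\chi_j$ is trivial, and $Z=\reals$; then $\{g:Ad_g\text{ is unipotent}\}=\semidirectprod$, and since $\semidirectprod$ is connected its identity component is the whole group, so the nilradical equals $\semidirectprod$ and the group is nilpotent. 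As an independent check, Engel's theorem gives the same conclusion: $\mathrm{ad}$ of the generator of the $\reals$-factor acts as $M$ on the abelian ideal $\reals^N$, so $M$ nilpotent forces every $\mathrm{ad}_x$ to be nilpotent.

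If instead $M$ has a nonzero eigenvalue $\mu_0$, then $\chi_0$ is nontrivial, having derivative $\mu_0\neq 0$ at the origin, so its kernel is a proper closed subgroup of $\reals$ and hence discrete. Since $Z\subseteq\ker\chi_0$, the set $Z$ is discrete, whence in $\reals^N\times Z$ the component of the identity $(0,0)$ is $\reals^N\times\{0\}$, that is, the subgroup $\reals^N$; this is the nilradical. The one step requiring care is precisely this discreteness: a nonzero eigenvalue need not have nonzero real part, and a purely imaginary $\mu_0$ yields the nontrivial kernel $\frac{2\pi}{\abs{\mu_0}}\integers$. The argument must therefore rely not on the sign of the real part but on the general fact that a nontrivial one-parameter subgroup of $\cpx^*$ has discrete kernel, which is what forces $Z$, and with it the identity component, to collapse onto $\reals^N$ in every case.
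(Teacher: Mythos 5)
Your proof is correct and follows essentially the same route as the paper: both identify the nilradical as the identity component of the set of $Ad$-unipotent elements (via the Raghunathan characterization) and reduce unipotency of $Ad_{\rho(b,t)}$ to unipotency of $A(t)$ using the eigenvalue analysis of~\eqref{eq:adg-eigenvalue-problem}. The only substantive difference is your explicit treatment of a nonzero purely imaginary eigenvalue $\mu_0$, where the paper's assertion that $\exp(td)\neq 1$ for every $t\neq 0$ is literally false (the relevant kernel is the discrete subgroup $\frac{2\pi}{\abs{\mu_0}}\integers$ rather than $\{0\}$); this case does not occur under the standing hypothesis that the eigenvalues of $M$ have nonzero real part, but your observation that a nontrivial one-parameter subgroup of $\cpx^*$ has discrete kernel repairs the general statement cleanly and still yields $\reals^N\times\{0\}$ as the identity component.
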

\begin{proof}
Firstly we show $\reals^N$ is contained in the
nilradical. By~\eqref{eq:adg-eigenvalue-problem}, for $t=0$,
$b \in \reals^N$ we can take $X=X(0,p)$ in  
equation~\eqref{eq:X-lie-algebra} where $p \in \reals^N$ is any 
vector such that $p\neq 0$. 
This matrix will be an
eigenvector of $Ad_g$ with eigenvalue $\lambda = 1$, hence the
claim. If $M$ has a nonzero eigenvalue $d$, 
then $\lambda = \exp(td)$ 
is an eigenvalue of $A(t)$ such that $\lambda \neq 1$ for $t \neq 0$. 
If $p \in
\cpx^N$ is a non zero eigenvector of $A(t)$ for this eigenvalue,
equation~\eqref{eq:adg-eigenvalue-problem} shows $X(p,0)$ is an
eigenvector of $Ad_g$ with the same eigenvalue, thus $Ad_g$ is not
unipotent for $t\neq 0$, hence the nilradical is also contained in
$\reals^N$. Finally if the only eigenvalue of $M$ is $0$, 
$A(t)$ is unipotent for all $t$, thus $Ad_g$ is also
unipotent for all $g \in \semidirectprod$ and the nilradical is
the whole group.
\end{proof}

As mentioned earlier, Mosak and Moskowitz studied  lattices of
$\semidirectprod$ for a diagonal representation $A(t)$ in
$\gsl(N,\reals)$. In preparation for 
the next proposition, we reproduce the relevant results found  
in~\cite{Mosak1997} that work for a general representation. 
Assume $M$ is invertible, hence the nilradical of
$\semidirectprod$ is $\reals^N$ by 
Proposition~\ref{prop:semidirect-nilradical}. If $\Gamma \subset
\semidirectprod$ is a lattice, then $L = \Gamma \cap \reals^N$ is a lattice in $\reals^N$~\cite{raghunathan1972}, thus there is a matrix
$\sigma \in \gl(N,\reals)$ such that $L = \sigma^{-1}\integers^N$. On
the other hand, $\Gamma \cap \reals$ is a lattice of
$\reals$ hence there is a $h > 0$ such that
$\Gamma \cap \reals = h\integers$. Let
$\mathcal{M}= L\rtimes_A h\integers$, $\mathcal{M}$ is a lattice in
$\Gamma$. Let us denote by $g_1 \cdot g_2$ the group operation in $\semidirectprod$, 
since $(0,h)\in \Gamma$, for any
$(p,0) \in L$, we have $(A(h)p,0) = (0,h)\cdot (p,0) \cdot (0,-h) \in \Gamma$, on the other hand $(A(h)p,0) \in \reals^N$, hence $A(h)L \subset L$, by
a similar argument $A(-h)L\subset L$, implying $L \subset A(h)L$,
whence $A(h)L = L$ and $B(h) = \sigma A(h) \sigma^{-1}$ is an homomorphism
of $\integers^N$, but the previous argument also shows $B(h)^{-1} =B(-h)$ is 
another homomorphism, hence $B(h) \in \gsl(N,\integers)$. With these 
previous steps, 
we arrive at the following proposition,
\begin{proposition}\label{prop:lattice}
If $A:\reals \to \gl(N,\reals)$ is a faithful, closed representation
with no eigenvalue of unit length, 
$\semidirectprod$ has a lattice $\Gamma$ if and only if 
there are a matrix $\sigma \in \gl(N, \reals)$ and 
  $h > 0$ such that,
  \begin{equation}\label{eq:lattice-condition}
      \sigma A(h)\sigma^{-1} \in \gsl(N,\integers), 
  \end{equation}
  moreover $\sigma^{-1}\integers^N \rtimes_A h\integers \subset \Gamma$. 
\end{proposition}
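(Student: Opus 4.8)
The plan is to read the necessity off the computation already carried out above the statement, and to concentrate the real effort on sufficiency. Write $G=\semidirectprod$. For the forward implication, suppose $\Gamma$ is a lattice. Since $A$ has no eigenvalue of unit length the matrix $M$ is invertible, so by Proposition~\ref{prop:semidirect-nilradical} the nilradical is $\reals^N$; hence $L=\Gamma\cap\reals^N$ is a lattice of $\reals^N$ and equals $\sigma^{-1}\integers^N$ for some $\sigma\in\gl(N,\reals)$, while $\Gamma\cap\reals=h\integers$ for some $h>0$. Conjugating $(p,0)\in L$ by $(0,h)\in\Gamma$ gives $A(h)L=L$, so $B(h)=\sigma A(h)\sigma^{-1}$ together with its inverse $B(-h)$ preserves $\integers^N$ and therefore $B(h)\in\gsl(N,\integers)$. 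The \emph{moreover} clause is then immediate: $L\subset\Gamma$ and $(0,h)\in\Gamma$, and since $A(h)L=L$ the subgroup they generate is precisely $\sigma^{-1}\integers^N\rtimes_A h\integers\subset\Gamma$.

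For sufficiency I would set $L=\sigma^{-1}\integers^N$, $\Gamma=L\rtimes_A h\integers$, and verify in turn that $\Gamma$ is a subgroup, that it is discrete, and that it is cocompact. Closure under the group law $(b_1,t_1)(b_2,t_2)=(b_1+A(t_1)b_2,\,t_1+t_2)$ reduces to checking $A(hk)L\subset L$ for every $k\in\integers$, and this is exactly where the hypothesis enters: $\sigma A(hk)\sigma^{-1}=B(h)^k\in\gsl(N,\integers)$ for all $k\in\integers$ (the inverse of $B(h)$ being integral as well), so $B(h)^k$ preserves $\integers^N$ and hence $A(hk)$ preserves $L$. Discreteness is then immediate, since $\Gamma\subset\sigma^{-1}\integers^N\times h\integers$, a discrete subset of the underlying manifold $\reals^N\times\reals$ of $G$.

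The main obstacle is cocompactness, which I would handle by realizing $\Gamma\backslash G$ as a torus bundle over a circle. The projection $(b,t)\mapsto t$ is a homomorphism of $G$ onto $\reals$ with kernel $\reals^N$ carrying $\Gamma$ onto $h\integers$, so it descends to a map $\Gamma\backslash G\to\reals/h\integers=\sphere^1$ with fibre $L\backslash\reals^N\cong\torus^N$. Equivalently, I would exhibit the compact set $K=\sigma^{-1}[0,1]^N\times[0,h]$ and check $\Gamma K=G$: given $(b,t)$, first pick $k\in\integers$ with $t-hk\in[0,h)$ and act by $(0,-hk)\in\Gamma$ to normalize the $\reals$-coordinate, then act by a suitable $(\ell,0)$, $\ell\in L$, to pull the $\reals^N$-coordinate into $\sigma^{-1}[0,1]^N$, which is possible because $\sigma L=\integers^N$ tiles $\reals^N$. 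Since $K$ is compact, $\Gamma\backslash G$ is compact. Finally, because $\sigma A(h)\sigma^{-1}\in\gsl(N,\integers)$ forces $\det A(h)=1$, hence $\operatorname{tr}M=0$ and $\det A(t)\equiv1$, the group $G$ is unimodular, so the compact quotient carries a finite invariant measure and $\Gamma$ is a genuine lattice. The delicate point throughout is keeping the noncommutativity straight: the invariance $A(hk)L=L$ is what makes the subgroup closure work, and it is precisely the integrality of $B(h)$ that supplies it.
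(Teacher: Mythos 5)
Your proposal is correct and follows essentially the same route as the paper: necessity is read off the conjugation computation $A(h)L=L$ preceding the statement, and sufficiency amounts to checking that $\sigma^{-1}\integers^N\rtimes_A h\integers$ is discrete and cocompact. The only difference is that the paper asserts discreteness and cocompactness in one sentence, whereas you verify them (closure via $B(h)^k\in\gsl(N,\integers)$, the compact fundamental set $\sigma^{-1}[0,1]^N\times[0,h]$, and unimodularity), which is a faithful expansion of the same argument rather than a new one.
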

\begin{proof}
If $\semidirectprod$ has  a lattice, by the previous
computations there are $\sigma \in \gl(N,\reals)$ and $h > 0$ such that $A(h) =
\sigma^{-1}B\sigma$ for some matrix $B\in\gsl(N,\integers)$ such that 
$\sigma^{-1}\integers^N \rtimes_A h\integers \subset \Gamma$, 
hence~\eqref{eq:lattice-condition} holds. 
If on the other hand~\eqref{eq:lattice-condition} holds, 
$\sigma^{-1}\integers^N\rtimes_A h\integers$ is a 
discrete and co-compact subgroup of 
$\semidirectprod$, hence it is a lattice of $\semidirectprod$.  
\end{proof}

\subsection{Hyperbolic affine actions in projective space}\label{sec:maximal-limit-set}

As a consequence of the Proposition~\ref{prop:lattice}, if
$\semidirectprod$ has a lattice then $A(t)$ is a representation in
$\gsl(N,\reals)$. In this section $A(t)\in\gsl(N,\reals)$ for all $t$,
by Proposition~\ref{prop:unique-maximal} we know the sets $U^{\pm}$
are maximal open sets where the action of $\semidirectprod$ is proper. 
Recall $\phi$ is the chart defined by 
the Equation~\eqref{eq:the-chart-phi}. If $\Gamma \subset
\semidirectprod$ is a lattice, then the sets $\Omega^{\pm} =
\phi^{-1}(U^{\pm})$ are discontinuity regions for the action of
$\Gamma$. Let $\limitset^{\pm} = \proj^N_{\cpx}\setminus
\Omega^{\pm}$, each of these sets is a limit set of $\semidirectprod$
for the
action of $\Gamma$ in the sense that it is the complement of a maximal
discontinuity region, we aim to study the dynamics of the action of 
$\Gamma$ in $\limitset = \limitset^+ \cap \limitset^-$. 
If $\exp(t M) \in \gsl(N,\reals)$ for all $t$, $M$ has no purely imaginary eigenvalue  and $\Gamma$ 
is a lattice of $\semidirectprod$, by Proposition~\ref{prop:lattice} there 
is a matrix 
 $\sigma \in \gl(N,\reals)$ and a constant $h>0$ such that 
$\mathcal{L} =\sigma^{-1}\integers^N \rtimes_A h\integers \subset \Gamma$. Let 
$p \in \integers^N$, $n \in \integers$,  if we change the coordinates 
of $\cpx^N$ by means of the matrix $\sigma^{-1}$, an element 
$g=(p,n) \in \mathcal{L}$ 
acts on $z \in \cpx^N$ as $g*z = B^nz + p$, where 
$B = \sigma^{-1}\exp(hM)\sigma \in\gsl(N,\integers)$. 
The linear action of the cyclic group generated by $B$ onto $\reals^N$
descends to an action of the $N$-torus $\torus^N = \reals^N/\integers^N$
as $(B^n, [x]) \mapsto [B^n x]$, $[x] \in \torus^N$.
The set of points $[x] \in \torus^N$ with dense orbits is also
dense by~\cite[Thm.~1.11]{walters2000introduction}. 
As mentioned earlier, points in $\proj^N_{\cpx}\setminus U$ have infinite
isotropy. On the other hand, we can identify $U$ with $\cpx^N$ by means of
the chart $(U,\phi)$. In this chart a point $z \in \cpx^N$ has infinite
isotropy if and only if for some $b \in \integers^N$,
$n\in\integers\setminus \{0\}$,
\begin{align}\label{eq:x-b-k}
z = (I - B^n)^{-1}\,b.
\end{align}
Note that for any pair $(b,n) \in \integers^N\rtimes_B\integers$ the
solution to~\eqref{eq:x-b-k} is real, 
we state the following two lemmas,
\begin{lemma}\label{lem:GA-Xs-dense}
  For almost every $x^* \in \reals^N$, the orbit of the 
action $(\integers^N \rtimes_B \integers) \times \reals^N \to \reals^N$, 
$((b,n),x^*) \mapsto B^nx^* + b$, is dense.
\end{lemma}

\begin{proof}
For almost every $x^{*} \in
\reals^N$ the orbit $[B^n   
x^{*}]$ is dense in $\torus^N$. Let $x \in \reals^N$ 
be any other point which we assume is in the interior
of the fundamental domain of the lattice generated by the canonical basis 
$\left\{e_1,\ldots,e_N\right\} \subset \reals^N$.    
For any $n >0$ there is a $b_n \in \integers^N$ such that 
$x$ and $B^nx^* + b_n$ are in the same 
tile, moreover since the action of the cyclic group $\langle B \rangle$ 
is dense in the torus, there is
an integer sequence $n_j$ such that $[B^{n_j}x^{*}]\to
[x]$ which implies that 
$B^{n_j}x^{*}+b_{n_j} \to x$ in $\reals^N$ as $j \to \infty$. 
  Since the set of interior points in the tile is dense, this proves
  the lemma.   
\end{proof}
\begin{lemma}\label{lem:I-Ak-inv-bounded}
Let $\abs{\cdot}$ be a norm in $\cpx^N$ and let $\|\cdot\|$ be the associated 
operator norm, for any matrix $A \in \gl(N,\reals)$ such that no eigenvalue is
of unit length the sequence $||(I - A^n)^{-1}||$ is bounded.
\end{lemma}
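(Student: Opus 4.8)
The plan is to reduce the boundedness of the whole sequence to the behavior of $(I-A^n)^{-1}$ on two $A$-invariant complementary subspaces, and in fact to prove the stronger statement that the sequence converges. First I would record that $(I-A^n)^{-1}$ is well defined for every $n$: the eigenvalues of $A^n$ are the numbers $\lambda^n$ with $\lambda$ an eigenvalue of $A$, and since $\abs{\lambda}\neq 1$ we have $\abs{\lambda^n}=\abs{\lambda}^n\neq 1$, so $1$ is never an eigenvalue of $A^n$ and $I-A^n$ is invertible. Because a convergent sequence in the finite dimensional space of matrices is bounded, and because all operator norms on that space are equivalent, it suffices to prove that $(I-A^n)^{-1}$ converges as $n\to\infty$ in some fixed operator norm.

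Next I would split $\cpx^N = E^s\oplus E^u$ into the sum of the generalized eigenspaces of $A$ associated to eigenvalues inside and outside the unit disk (there are no others, since no eigenvalue has unit length). This decomposition is $A$-invariant, so each of $A^n$, $I-A^n$ and $(I-A^n)^{-1}$ is block diagonal with respect to it, and it is enough to analyse the two blocks separately. On the stable block the restriction $A|_{E^s}$ has spectral radius strictly less than $1$, hence $\|A^n|_{E^s}\|\to 0$ (for instance by Gelfand's formula, since the spectral radius equals $\lim_n\|A^n|_{E^s}\|^{1/n}<1$). Therefore $I-A^n|_{E^s}\to I$ and, by continuity of matrix inversion on the set of invertible matrices, $(I-A^n|_{E^s})^{-1}\to I$.

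On the unstable block I would use the identity $I-A^n = -A^n(I-A^{-n})$, which is legitimate because $A|_{E^u}$ is invertible, to obtain $(I-A^n|_{E^u})^{-1} = -(I-A^{-n}|_{E^u})^{-1}A^{-n}|_{E^u}$. Since $A^{-1}|_{E^u}$ has spectral radius less than $1$, the same Gelfand estimate gives $\|A^{-n}|_{E^u}\|\to 0$ while $(I-A^{-n}|_{E^u})^{-1}\to I$, so the unstable block tends to $0$. Combining the two blocks, $(I-A^n)^{-1}$ converges to the projection onto $E^s$ along $E^u$; being convergent it is a bounded sequence, which is the assertion of the lemma. The same argument applied to $A^{-1}$, which also has no eigenvalue of unit length, covers negative indices $n$ if they are needed.

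The only point that requires care is that one cannot simply argue on the spectrum: the eigenvalues $1/(1-\lambda^n)$ of $(I-A^n)^{-1}$ do stay bounded, but boundedness of eigenvalues does not bound the operator norm in the presence of nontrivial Jordan blocks. This is exactly why the proof passes to convergence of the \emph{operators} restricted to the invariant subspaces, the decisive quantitative input being that a spectral radius below $1$ forces the matrix powers to tend to $0$ in operator norm.
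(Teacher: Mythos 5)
Your proof is correct and follows essentially the same route as the paper: the same splitting of $\cpx^N$ into the stable and unstable $A$-invariant subspaces, the limit $(I-A^n|_{E^s})^{-1}\to I$ by continuity of inversion, and the same identity $(I-A^n)^{-1}=-(I-A^{-n})^{-1}A^{-n}$ on the unstable part. The only cosmetic difference is at the end, where you exploit that $(I-A^n)^{-1}$ is block diagonal with respect to the decomposition (and hence converges to the projection onto $E^s$), while the paper instead introduces an auxiliary additive norm to bound the full operator norm by the sum of the two restricted norms; both finish by equivalence of norms.
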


\begin{proof}
Firstly, let us assume the spectrum of $A$ is contained in the open unit
disk, then $A^n\to 0$ as $n \to \infty$. By
continuity of the matrix inverse function, $\|(I-A^n)^{-1}\| \to \|I\|
= 1$.  On the other hand, if no eigenvalue of $A$ is contained in the
 disk, then the matrix $A^{-1}$ satisfies our first
assumption, hence $\|(I - A^n)^{-1}\| = \|(I - A^{-n})^{-1}A^{-n}\|
\to 0$ as $n \to \infty$. Finally, for a general matrix we can
decompose $\cpx^N$ in two $A$-invariant subspaces, $\cpx^N = X\oplus Y$,
such that as a linear operator, the restriction $A|_X$ satisfies the
first condition whereas $A|_Y$ satisfies the second. In this case we
can introduce a second norm $|\cdot|_{\oplus}$ in $\cpx^N$ such that if $z =
x + y$, $x \in X$, $y \in Y$, then $|z|_\oplus = |x|_\oplus + |y|_\oplus$. Let
$\|\cdot\|_\oplus$ be the corresponding operator norm, the following bounds
hold,  
\begin{align*}
  \|(I - A^n)^{-1}\|_\oplus &= \sup_{|z|_\oplus=1}
                         \left\lvert(I-A^n)^{-1}z\right\rvert_\oplus\\
                       &\leq \sup_{|x|_\oplus\leq 1}
                         \left\lvert(I-A^n)|_X^{-1}x\right\rvert_\oplus 
                         + \sup_{|y|_\oplus\leq 1}
                         \left\lvert(I-A^n)|_Y^{-1}y\right\rvert_\oplus\\ 
  &\leq \|(I - A^n)|_X^{-1}\|_\oplus + \|(I - A^n)|_Y^{-1}\|_\oplus,
\end{align*}
the right hand side of the last inequality is convergent as $n\to
\infty$, hence $\|(I - A^n)^{-1}\|_\oplus$ is a bounded sequence. Since any
two norms in $\gl(N,\reals)$ are equivalent, this proves the Lemma. 
\end{proof}

\begin{lemma}\label{lem:xg-loxodromic-R}
The set of solutions $x(b,n)$ to equation~\eqref{eq:x-b-k}
is dense in $\reals^N$.
\end{lemma}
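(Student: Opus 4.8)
The plan is to reduce the statement to the classical density of periodic points of the hyperbolic toral automorphism $B$ in the torus $\torus^N = \reals^N/\integers^N$, and then to lift that density from $\torus^N$ back to $\reals^N$ by exploiting an $\integers^N$-translation invariance of the solution set. Throughout, recall that $x(b,n) = (I-B^n)^{-1}b$ is exactly the unique fixed point of the affine map $z \mapsto B^n z + b$.

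First I would record the translation invariance of the set $D$ of all solutions to~\eqref{eq:x-b-k}: if $x = (I-B^n)^{-1}b$ solves~\eqref{eq:x-b-k} and $m \in \integers^N$, then $x + m = (I-B^n)^{-1}b'$ with $b' = b + (I-B^n)m = b + m - B^n m$, and $b' \in \integers^N$ because $B \in \gsl(N,\integers)$ forces $B^n m \in \integers^N$. Hence $D + \integers^N = D$. Consequently, density of $D$ in $\reals^N$ is equivalent to density of its image in $\torus^N$: given $w \in \reals^N$ and $\epsilon < 1/2$, an approximation of $[w]$ modulo $\integers^N$ produces $d \in D$ and $m \in \integers^N$ with $\abs{(d-m) - w} < \epsilon$, and $d - m \in D$ again by invariance.

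Next I would identify the image of $D$ in $\torus^N$ with the periodic points of $B$. Reducing~\eqref{eq:x-b-k} modulo $\integers^N$ gives $B^n\,x(b,n) - x(b,n) = (B^n - I)(I-B^n)^{-1}b = -b \in \integers^N$, so $[x(b,n)]$ is $n$-periodic; conversely, if $B^n[x] = [x]$ then $(B^n - I)x = c$ for some $c \in \integers^N$, and since $B$ has no eigenvalue of unit length the matrix $B^n - I$ is invertible, so $x = (I-B^n)^{-1}(-c) = x(-c,n)$ is a solution. Thus the image of $D$ in $\torus^N$ is precisely the set of periodic points of $B$. Since every rational point of $\torus^N$ is periodic — the $B$-orbit of a point of $\tfrac{1}{q}\integers^N/\integers^N$ stays inside this finite set and is therefore periodic, as $B$ acts invertibly — the periodic points contain the dense set $\mathbb{Q}^N/\integers^N$ and so are dense in $\torus^N$. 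Combining this with the previous paragraph yields the Lemma.

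I expect the main obstacle to be resisting the natural but doomed quantitative approach: choosing $b$ as the nearest integer vector to $(I-B^n)w$ and estimating $\abs{x(b,n) - w} = \abs{(I-B^n)^{-1}(b - (I-B^n)w)} \le \|(I-B^n)^{-1}\|\,\abs{b-(I-B^n)w}$ via Lemma~\ref{lem:I-Ak-inv-bounded}. This bound is only of order $\|(I-B^n)^{-1}\|$, which stays bounded away from $0$, so the rounding error is never driven small; and the spectral refinement that $(I-B^n)^{-1}$ tends to the stable projection as $n \to +\infty$ only fills out the stable affine subspace through a chosen base point and cannot reach all of $\reals^N$. The real content is therefore topological rather than metric — the $\integers^N$-periodicity of $D$ together with density of periodic points — and recognizing this is the decisive step; on this route Lemma~\ref{lem:I-Ak-inv-bounded} is in fact not needed.
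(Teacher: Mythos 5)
Your argument is correct, but it takes a genuinely different route from the paper's. The paper stays in $\reals^N$ and argues quantitatively: it invokes Lemma~\ref{lem:GA-Xs-dense} (for almost every $x^{*}$ the affine orbit $y(b,n)=B^{n}x^{*}+b$ is dense, which rests on the ergodic-theoretic fact that almost every orbit of a hyperbolic toral automorphism is dense), writes the identity $x(b,n)-x^{*}=(I-B^{n})^{-1}\bigl(y(b,n)-x^{*}\bigr)$, and uses the uniform bound of Lemma~\ref{lem:I-Ak-inv-bounded} to conclude that the fixed points $x(b,n)$ accumulate at each such $x^{*}$; density follows because those $x^{*}$ form a dense (full-measure) set. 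Note this is \emph{not} the ``doomed'' rounding argument you warn against: the error $\lvert y(b,n)-x^{*}\rvert$ is driven to zero by recurrence of the orbit back to $x^{*}$, not by choosing $b$ as a nearest integer vector, so the bounded constant $C$ is harmless. Your route instead observes that the solution set $D$ satisfies $D+\integers^{N}=D$ and that its image in $\torus^{N}$ is exactly the set of $B$-periodic points (using that $I-B^{n}$ is invertible for $n\neq 0$ since $B$ is hyperbolic), which are dense because every rational point of the torus is periodic under a $\gsl(N,\integers)$ matrix. All three steps check out, and your proof is more elementary: it needs neither the ergodicity input behind Lemma~\ref{lem:GA-Xs-dense} nor the operator-norm bound of Lemma~\ref{lem:I-Ak-inv-bounded}. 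What the paper's choice buys is economy elsewhere: Lemma~\ref{lem:GA-Xs-dense} is needed anyway for Proposition~\ref{prop:discrete-u-max} and for part~(\ref{it:thm-1-3-2}) of Theorem~\ref{thm:disc-region}, so the authors reuse machinery they must develop regardless, whereas your argument would be the cleaner standalone proof of this particular lemma.
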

\begin{proof}
  Let $x^{*} \in \reals^N$, the 
  Lemma~\ref{lem:GA-Xs-dense} defines a sequence
$y{(b,n)} = B^nx^{*} + b$ 
which we know is
dense in $\reals^N$ for almost every $x^*$. A short
computation shows, 
\begin{align}
\label{eq:25}
x(b,n) - x^{*} = (I -
  B^n)^{-1}(y(b,n) - x^{*}), \qquad n \neq 0. 
\end{align}
Denoting by $|\cdot|$ any norm in $\cpx^N$ and by $\|\cdot\|$ the
corresponding operator norm in $\gl(N, \cpx)$, 
equation~\eqref{eq:25} implies, 
\begin{align}
|x(b,n) - x^{*}| &\leq \|(I -
  B^n)^{-1}\|\,|y(b,n) - x^{*}|,\nonumber\\
 &\leq C\,|y(b,n) - x^{*}|, \label{eq:31}
\end{align}
where the positive constant $C$ is given by
Lemma~\ref{lem:I-Ak-inv-bounded}. 
Since $\{y(b,n) \,:\, b \in \integers^N, n \in \integers^+\}$ is dense
in $\reals^N$, 
there is a convergent subsequence $y{(b_j,n_j)} \to
x^{*}$ as $j\to\infty$, hence we can find a subsequence $x(b_j,n_j)
\to x^{*}$ as $j \to \infty$. This proves the lemma because the set of
points $x^{*}$ for which the convergence holds is dense in
$\reals^N$.   
\end{proof}

Proposition~\ref{prop:discrete-u-max} is similar to 
Proposition~\ref{prop:unique-maximal}, we make a few modifications in the proof.

\begin{proposition}\label{prop:discrete-u-max}
If $U \subset\cpx^N$ is an open $\integers^N\rtimes_B\integers$
invariant set  where the action is proper, then $U \subset U^+$ or $U
\subset  U^-$.
\end{proposition}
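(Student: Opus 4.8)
The plan is to mimic the proof of Proposition~\ref{prop:unique-maximal}, replacing the one-parameter flow $g_n=(0,n)$ with the discrete iterates $(0,n)\in\integers^N\rtimes_B\integers$. The key observation is that, since $B$ is the time-$h$ map of the flow $A(t)$, the stable and unstable subspaces $E^s,E^u$ for $M$ coincide with the subspaces on which $B^n\to 0$ (respectively $B^{-n}\to 0$) as $n\to\infty$; the discrete group sees the same splitting $\cpx^N=\reals^N\oplus iE^s\oplus iE^u$ and hence the same sets $U^{\pm}$.

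First I would dispose of the degenerate cases exactly as before: if $U^+$ is empty then $E^u$ is trivial, so $U^-=\cpx^N\setminus\{0\}$ and the conclusion is automatic, and symmetrically if $U^-$ is empty. So assume both $U^{\pm}$ are nonempty. Since $U^+\cup U^-=\cpx^N\setminus\{0\}$, suppose for contradiction that the invariant open set $U$ meets both $U^+\setminus U^-$ and $U^-\setminus U^+$, picking $z_1\in U\cap(U^+\setminus U^-)$ and $z_2\in U\cap(U^-\setminus U^+)$. Using invariance under the translation part $\integers^N$ (which acts as $z\mapsto z+b$) one cannot in general clear the real part to make $z_j$ purely imaginary, and this is the one genuine point of departure from the continuous case: there I would instead work directly with the imaginary projections. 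Writing $z_1=b_1+iv_1$ with $\pi_u(v_1)\neq0$, $\pi_s(v_1)=0$ and $z_2=b_2+iw_2$ with $\pi_s(w_2)\neq0$, $\pi_u(w_2)=0$, I would form the sequence $w_n=(0,-n)*z_1+(z_2-b_2)=B^{-n}(b_1)+iB^{-n}v_1+iw_2$ and check that $w_n\to z_2$, using that $B^{-n}v_1\to 0$ on $E^u$ and $B^{-n}b_1\to 0$ on the real part relative to its own stable component; since $U$ is open, $w_n\in U$ for large $n$.

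Then applying $g_n=(0,n)$ gives $g_n*w_n\to iv_1$ (a finite limit), because $B^n$ contracts the stable directions carrying $w_2$ while fixing the $iv_1$ term after cancellation. Thus $w_n$ and $g_n*w_n$ both converge while the isotropy-free sequence $g_n=(0,n)$ has no convergent subsequence in $\integers^N\rtimes_B\integers$ (as $n\to\infty$ in $\integers$), contradicting properness via Proposition~\ref{lem:prop-action-equiv}. The main obstacle is the bookkeeping of the real part $\reals^N$ under $B^{\pm n}$: unlike the flow, the discrete group's translation subgroup acts on $\reals^N$ only through integer combinations, so I cannot freely normalize $z_j$ to be purely imaginary, and I must instead verify that the real components of $w_n$ and of $g_n*w_n$ stay controlled using the boundedness furnished by Lemma~\ref{lem:I-Ak-inv-bounded} together with the stable/unstable contraction of $B$.
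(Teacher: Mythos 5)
Your overall skeleton (build a sequence $w_n\to z_2$ lying eventually in $U$, apply $g_n=(0,n)$ with $n\to\infty$ so that $g_n*w_n$ also converges, and contradict properness via Proposition~\ref{lem:prop-action-equiv}) is the same as the paper's, but your construction of $w_n$ fails at exactly the point you flag as ``the main obstacle,'' and the tools you propose do not repair it. With $w_n=B^{-n}b_1+iB^{-n}v_1+iw_2$, the imaginary part behaves well ($B^{-n}v_1\to 0$ since $v_1\in E^u$), but the real part $B^{-n}b_1$ does \emph{not} tend to $0$: the matrix $B^{-n}$ expands the stable directions, so $B^{-n}b_1$ diverges whenever the $E^s$-component of $b_1$ is nonzero. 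Even in the special case $b_1\in E^u$ the limit would be $iw_2=z_2-b_2$ rather than $z_2$, so $w_n$ need not enter $U$ at all. Lemma~\ref{lem:I-Ak-inv-bounded} is of no help here: boundedness of $\|(I-A^n)^{-1}\|$ controls fixed points of the affine maps, not the orbit of a real vector under $B^{\pm n}$.

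The missing idea is how to match the real parts of $z_1$ and $z_2$ when the translation subgroup is only $\integers^N$. The paper does this with Lemma~\ref{lem:GA-Xs-dense}: for almost every $x_1$ (hence, after a small perturbation permitted by openness of $U$) the affine orbit $\{B^nx_1+b : b\in\integers^N,\ n\in\integers\}$ is dense in $\reals^N$, so one can choose $g_j=(b_j,n_j)$ with $n_j\to\infty$ and $g_j*x_1\to x_2$; setting $w_j=g_j*x_1+iB^{n_j}y_1+iy_2$ (with $y_1\in E^s\setminus\{0\}$, $y_2\in E^u\setminus\{0\}$) then gives $w_j\to z_2$ and $g_j^{-1}*w_j\to z_1$ simultaneously. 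Without invoking this density of hyperbolic affine orbits (or some equivalent), there is no mechanism to bridge the real parts, since nothing forces the real parts of points of $U\cap(U^+\setminus U^-)$ and of $U\cap(U^-\setminus U^+)$ to be related. (A secondary slip: your claimed limit $g_n*w_n\to iv_1$ should be $b_1+iv_1$, though this does not affect the structure of the argument.) As written, the proposal has a genuine gap at its central step.
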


\begin{proof}
  Let $E^s$ and $E^u$ denote the stable and unstable subspaces of $B$
  and let us assume in order to develop a contradiction the existence
  of two points,
  \begin{align*}
    z_1 &\in U \cap (U^-\setminus U^+) \subset (\reals^N\oplus
          iE^s)\setminus \reals^N\oplus 0,\\
    z_2 &\in U \cap (U^+\setminus U^-) \subset (\reals^N\oplus
          iE^u)\setminus \reals^N\oplus 0,
  \end{align*}
 such that $z_j = x_j + iy_j$, where $y_1\in E^s\setminus\{0\}$
and $y_2\in E^u\setminus\{0\}$. Since $U\cap (\reals^N\oplus 0)$ is relatively
open, by the Lemma~\ref{lem:GA-Xs-dense} we may assume there is a
sequence $g_j = (b_j, n_j) \in \integers^N\rtimes_B\integers$ such
that $g_j*x_1 \to x_2$ as $j\to \infty$, moreover, the proof of the Lemma shows that we
may assume $n_j \to \infty$ as $j \to \infty$. Let
$w_j = g_j*x_1 + iB^{n_j}y_1 + iy_2$, then for $j$ sufficiently large,
$w_j \in U$ because $w_j \to z_2$. On the other hand, 
$g_j^{-1}*w_j= x_1 + iy_1 + i B^{-n_j}y_2 \to z_1$
as $j \to \infty$. This is a contradiction because the sequence
$g_j^{-1}$ has no convergent subsequence in
$\integers^N\rtimes_B\integers$.
\end{proof}

\begin{proof}
  [Proof of Theorem~\ref{thm:disc-region}-(\ref{it:thm-1-3})]
  If $\Gamma \subset \semidirectprod$ is a lattice, by 
Proposition~\ref{prop:lattice} there are matrices
$B \in\gsl(N,\integers)$,  
$\sigma \in \gl(N,\reals)$ and $h > 0$ such that 
$\mathcal{L} = \sigma^{-1}\integers \rtimes_B h\integers$ is another
lattice  contained in $\Gamma$. By the Lemma~\ref{lem:xg-loxodromic-R}, for
almost every $x \in \limitset \cap
\proj^N_\reals$ the $\mathcal{L}$-orbits are dense, hence the same
claim holds for $\Gamma$.
The second claim follows from the fact that $\mathcal{L}$ has  
finite index in $\Gamma$ and the closure of the set of points with
infinite isotropy with respect to $\mathcal{L}$ is $\limitset$.
\end{proof}

\begin{proof}[Proof of Theorem~\ref{thm:limitset}]
  For any hyperbolic toral automorphism $B \in \gsl(N,\integers)$
  by~\cite[Thm.~1]{culver_existence_1966} there is a real $N\times N$
  matrix $M$ such that either $B = \exp(M)$ or $B^2 = \exp(M)$. In
  any case we define $A(t) = \exp(tM)$, $t\in\reals$ and the group
  \begin{equation}
    \label{eq:disconnected-lift}
    \tilde G = \left\{
      \begin{pmatrix}
        C & p\\
        0 & 1
      \end{pmatrix}\in\gsl(N+1,\reals)\,:\,
      C = A(t)\text{ or } C = BA(t)\,\text{for some } t \in \reals
    \right\},
  \end{equation}
  then for the representation $\rho$ defined
  in~(\ref{eq:rep-integer}), 
  $\rho(\integers^N\rtimes_B\integers)$ is a lattice of $\tilde G$.
  \par{\emph{(\ref{thm:1-2-1}).}} Let $G$ be the projection of
  $\tilde G$ to $\psl(N+1,\cpx)$, and let $G_0 \subset G$ be the connected
  component of the identity which admits a lift to $\semidirectprod$. 
  The matrices $B$ and $B^2$ have
  eigenvalues at both sides of the unit disk, hence by the
  Theorem~\ref{thm:disc-region}-(\ref{it:thm-1-1})-(\ref{it:thm-1-2})
  there are two open sets $\Omega_1$,
  $\Omega_2$ where $G$ acts properly and freely. From the proof
  of the Theorem~\ref{thm:disc-region}, we know $\Omega_i\subset U$
  for $(U,\phi)$ the canonical chart defined
  in~\eqref{eq:the-chart-phi}. A vector $z \in \cpx^N$
  belongs to $\phi(\Omega_i)$ if and only if $z = x + iy$, and $y \in
  (E^s\setminus \{0\}) \cup (E^u\setminus \{0\})$, where $E^s$ and
  $E^u$ are the stable and unstable subspaces of $A(1)$. The
  stable and unstable subspaces of $B$ and $B^2$ are the same, hence each
  $\Omega_i$ is $\Gamma$-invariant in either case.
$\Gamma$ acts 
properly discontinuously on each $\Omega_i$ because the action of
$G_0$ is proper hence  
$\integers^N\rtimes_{B^2} \integers$ acts properly discontinuously and 
is a subgroup of finite index  of $\integers^N\rtimes_B\integers$. The
final claim follows from  Proposition~\ref{prop:discrete-u-max} applied in 
the chart $(U,\phi)$.
\par{\emph{(\ref{thm:1-2-2})}.} Note that each $\Omega_i$ is also
$G$-invariant and since the index $[G:G_0]$ is finite, $G$ acts
properly on $\Omega_i$, moreover, the action is free, hence the
canonical projection $\Omega_i\to G\setminus \Omega_i$ is a principal
bundle with fibres diffeomorphic to $G$. If $G$ is connected, by the
Theorem~\ref{thm:disc-region}-(\ref{it:thm-1-1}) this bundle is
trivial and the base space is contractible to a sphere. In the second case,
note that $G_0$ is normal in $G$ and 
$G_0\setminus G \cong\integers_2$. If $g \in G$ has non trivial class
in $G_0\setminus G$, then it determines an involution $f_g: G_0\setminus
\Omega \to G_0\setminus \Omega$, $f_g(G_0x) = G_0(g*x)$, hence 
an action of $\integers^2$ into $G_0\setminus \Omega$. The map
$G_0\setminus \Omega \to G\setminus \Omega$, $G_0x\mapsto Gx$ projects
to a diffeomorphism $\integers^2\setminus (G_0\setminus\Omega_i)\cong
G\setminus \Omega_i$. We aim to show by means of this diffeomorphism
that $G\setminus \Omega_i$ is homotopically equivalent to a real
projective space. Assume without loss of generality  
$G_0\setminus \Omega_i$ diffeomorphic to
$\semidirectprod \setminus U^-$ and that $g$ lifts to 
$\tilde g = \left(\begin{smallmatrix}
    B & 0\\
    0 & 1
\end{smallmatrix}\right)$. 
Let $\psi: S^{N_--1}\times E^u \to
G_0\setminus \Omega_i$, $\psi(x,y) = G_0*(ix + iy)$ be the
diffeomorphism induced
by the Proposition~\ref{prop:Uplus-difeo}. Recall for any trajectory
$A(t) x$ in the stable subspace, there is exactly one intersection
with the stable sphere $S$, hence for any $x \in E^s$ there is exactly
one parameter $t(x) \in \reals$ such that $A(t(x)) x \in S$. By the
implicit function theorem, the 
function $t(x)$ is smooth, moreover, if $F: S \to \tilde G$ is
the map $F(x)=A(t(\tilde g * x))\tilde g$, then  
$\tilde g * \psi(x,y) = G_0* (i F(x)x + i F(x)y)$. Let $f_1: S \to S$ be
the map $f_1(x) = F(x)*x$ and $f_2: S \to \gl(E^u)$ be the map $f_2(x)\,y
= F(x)*y = A(t(B x))By$, then up to diffeomorphism, $\tilde g$ acts on
$S \times E^u$ as $\tilde g * (x, y) = (f_1(x), f_2(x)\,y)$, at the
same time, $\tilde g$ also acts on $S$ as  $\tilde g * x = f_1(x)$ and
$S\times E^u$ is homotopically equivalent to the sphere $S \times
\{0\}$ by the homotopy $h_s(x,y) = (x, sy)$, $s \in [0, 1]$, since
$f_2$ depends only on $x \in S$ and is linear, $h_s$ induces another
homotopy $\hat h_s: \integers^2\setminus (S \times E^u) \to
\integers^2\setminus (S\times\{0\})$ as $\hat h_s([(x,y)])=
[h_s(x,y)]$. Thus the quotient space $\integers^2\setminus
(G_0\setminus \Omega)$ is homotopically equivalent to the quotient
$\integers^2\setminus S$ of an sphere by an involution. By a known result~\cite{olum1953mappings} this implies that it is
homotopically equivalent to $\proj^{N_s-1}_{\reals}$. 
  \par{\emph{(\ref{thm:1-2-3}).}} If $G$ is connected, the result is
  immediate from the Theorem~\ref{thm:disc-region}, otherwise, the
  subgroup $\Gamma'$ satisfies the conclusion of the Theorem~\ref{thm:disc-region}.  Thus, also with respect to $\Gamma$  points of
  infinite isotropy in $\limitset$ are
  dense and for
  almost any $z \in \limitset \cap \proj^N_{\reals}$ the orbits are
  dense. 
\end{proof}

\section*{Acknowledgments}
The research of W. Barrera, R. Garc\'ia and J. P. Navarrete have been supported by the CONACYT, Proyecto Ciencia de Frontera 2019--21100 via the Faculty of Mathematics, UADY.

\bibliographystyle{amsplain}
\bibliography{main}

\end{document}